\documentclass[reqno, 12pt]{amsart}
\usepackage{amsmath,amsxtra,amssymb,latexsym, amscd,amsthm}
\usepackage[utf8]{inputenc}
\usepackage{enumerate}
\usepackage[mathscr]{euscript}
\usepackage{mathrsfs}
\usepackage[english]{babel}
\usepackage{color}
\usepackage{cite}
\newtheorem {theorem}{Theorem}[section]

\newtheorem {lemma}[theorem]{Lemma}
\newtheorem {corollary}[theorem]{Corollary}
\newtheorem {definition}[theorem]{Definition}
\newtheorem {example}[theorem]{Example}
\newtheorem {remark}[theorem]{Remark}

\def\ar{a\kern-.370em\raise.16ex\hbox{\char95\kern-0.53ex\char'47}\kern.05em}
\def\ees{{\accent"5E e}\kern-.385em\raise.2ex\hbox{\char'23}\kern-.08em}
\def\eex{{\accent"5E e}\kern-.470em\raise.3ex\hbox{\char'176}}
\def\AR{A\kern-.46em\raise.80ex\hbox{\char95\kern-0.53ex\char'47}\kern.13em}
\def\EES{{\accent"5E E}\kern-.5em\raise.8ex\hbox{\char'23 }}
\def\EEX{{\accent"5E E}\kern-.60em\raise.9ex\hbox{\char'176}\kern.1em}
\def\ow{o\kern-.42em\raise.82ex\hbox{
\vrule width .12em height .0ex depth .075ex \kern-0.16em \char'56}\kern-.07em}
\def\OW{O\kern-.460em\raise1.36ex\hbox{
\vrule width .13em height .0ex depth .075ex \kern-0.16em \char'56}\kern-.07em}
\def\UW{U\kern-.42em\raise1.36ex\hbox{
\vrule width .13em height .0ex depth .075ex \kern-0.16em \char'56}\kern-.07em}
\def\DD{D\kern-.7em\raise0.4ex\hbox{\char '55}\kern.33em}

\title[]{The local minimality of differentiable functions}

\author{TI\EES N-S\OW N PH\d{A}M}
\address{Department of Mathematics, Dalat University, 1 Phu Dong Thien Vuong, Dalat, Vietnam}
\email{sonpt@dlu.edu.vn}

\date{ \today}

\subjclass[2010]{Primary 49K40 $\cdot$ 90C30; Secondary 58K40}

\keywords{Taylor polynomial, {\L}ojasiewicz inequalities, stability, local minimum}

\begin{document}

\begin{abstract} 
In this paper we present necessary and sufficient conditions (in terms of {\L}ojasiewicz inequalities) 
for the stability of local minimum points in smooth unconstrained optimization.
In particular, we derive a sufficient condition for which  the local minimum property of a given function is determined by its Taylor polynomial of a certain degree.
\end{abstract}

\maketitle

\section{Introduction}
We are interested in the question under which conditions the local minimum property of a differentiable function is determined by its Taylor polynomial of a certain degree. 
More precisely, let $f \colon \mathbb{R}^n \to \mathbb{R}$ 
be a function of class $C^r$ in a neighbourhood of $\overline{x} \in \mathbb{R}^n,$ and we would like to know whether 
the local minimum property of $f$ at $\overline{x}$ is determined by one of $T^rf(\overline{x}),$ 
where $T^rf(\overline{x})$ stands for the $r$th Taylor polynomial of $f$ at $\overline{x}.$
This question was not considered in optimization, to the best of our knowledge. 
Let us begin with the following examples.

\begin{example}\label{VD11}{\rm
(i) Consider the function $f\colon \mathbb{R} \to \mathbb{R}, x \mapsto x^{r} + R(x),$ where $r$ is a positive integer and $R \colon \mathbb{R} \to \mathbb{R}$ is a $C^\infty$-function defined by
\begin{eqnarray*}
R(x) :=
\begin{cases}
e^{-\frac{1}{x^2}} & \textrm{ if } x \ne 0, \\
0 & \textrm{ otherwise.}
\end{cases}
\end{eqnarray*}
Then the Taylor polynomial $T^rf(0)$ has a local minimum at $0$ if and only if $f$ has a local minimum at $0.$

(ii) Consider the function $f \colon \mathbb{R}^n \to \mathbb{R}, x \mapsto \frac{1}{2} \langle Ax, x\rangle + R(x),$ where $A$ is a nonsingular square matrix of size $n$ and $R$ is a function of class $C^2$ with $T^2R(0) \equiv 0.$ Then 
the Taylor polynomial $T^2f(0)$ has a local minimum at $0$ if and only if $f$ has a local minimum at $0.$ See also Corollary~\ref{HQ3}.
}\end{example}

The analysis of the above examples suggests that the local minimum property of a given function can be determined by its Taylor polynomial of a certain degree.

In this paper we provide necessary and sufficient conditions (in terms of {\L}ojasiewicz inequalities) 
for the stability of local minimum points in smooth unconstrained optimization problems.
In particular, we obtain sufficient conditions for which  the local minimum property of a differentiable function is determined by its Taylor polynomial of a certain degree. The obtained  results, together with the ones in \cite{Guo2019, PHAMTS2020}, can assist in checking local minimum points.

This paper is inspired by works on the sufficiency of jets, that is one of the central themes in the singularity theory of differentiable mappings; see, for example, \cite{Bekka2024, Bochnak1971, Grandjean2004, Kuiper1972, Kuo1969, Kuo1969-2, Kuo1972, Mather1968, Migus2016, Thom1964, Xu2007}. On the other hand, the tools used in this study come from variational analysis and generalized differentiation, see \cite{Mordukhovich2006, Mordukhovich2018, Rockafellar1998}.

Note that, the assumption on the smoothness of the functions in question can be weakened in some situations. 
However, to lighten the exposition, we do not pursue this assumption here.

The rest of the paper is organized as follows. Some definitions and preliminary results from variational analysis are recalled in Section~\ref{Section2}. The results and their proofs are presented in Section~\ref{Section3}. 

\section{Preliminaries} \label{Section2}

\subsection{Notation and definitions} 
Throughout this work we deal with the Euclidean space $\mathbb{R}^n$ equipped with the usual scalar product $\langle \cdot, \cdot \rangle$ and the corresponding norm $\| \cdot\|.$ We denote by $\mathbb{B}_r(x)$ the closed ball centered at $x$ with radius $r;$  when ${x}$ is the origin of $\mathbb{R}^n$ we write $\mathbb{B}_{r}$ instead of $\mathbb{B}_{r}({x}),$ and when $r = 1$ we write  $\mathbb{B}$ instead of $\mathbb{B}_{1}.$ We will adopt the convention that $\inf \emptyset = \infty$ and $\sup \emptyset = -\infty.$

For a nonempty set $\Omega \subset \mathbb{R}^n,$ the closure and interior of $\Omega$ are denoted, respectively, by $\mathrm{cl}\, {\Omega}$ and $\mathrm{int}\, {\Omega},$ while the distance of a point $x \in \mathbb{R}^n$ from the set $\Omega$ is defined by
\begin{eqnarray*}
\mathrm{dist}(x, \Omega) &:=& \inf_{y \in \Omega} \|x - y\|.
\end{eqnarray*}
If $\Omega = \emptyset,$ we let $\mathrm{dist}(x, \Omega) := \infty$ for all $x.$ The set $\Omega$ is said to be {\em locally closed} at a point ${x}$ (not necessarily in $\Omega$) if $\Omega \cap V$ is closed for some closed neighbourhood $V$ of ${x}$ in $\mathbb{R}^n.$ 

Let $\overline{\mathbb{R}} := \mathbb{R} \cup \{\infty\}.$ For an extended real valued function $f \colon \mathbb{R}^n \rightarrow \overline{\mathbb{R}},$ we denote its {\em effective domain} by
\begin{eqnarray*}
\mathrm{dom} f &:=& \{ x \in \mathbb{R}^n \ | \ f(x) < \infty  \}.
\end{eqnarray*}
We call $f$ a {\em proper} function if $f(x) < \infty$ for at least one $x \in \mathbb{R}^n,$ or in other words, if $\mathrm{dom} f$ is a nonempty set.
The function $f$ is said to be {\em lower semicontinuous} if for each $x \in \mathbb{R}^n$ the inequality $\liminf_{x' \to {x}} f(x') \geqslant f({x})$ holds.

\subsection{Subdifferentials}

Here we recall some definitions and properties of subdifferentials of real-valued functions, which can be found in~\cite{Mordukhovich2006, Mordukhovich2018, Rockafellar1998}.

\begin{definition}[subdifferentials]{\rm
(i) The {\em Fr\'echet subdifferential} $\hat{\partial} f(x)$ of a lower semicontinuous function $f$ at $x \in \mathbb{R}^n$ is given by
\begin{eqnarray*}
\widehat{\partial} f(x) &:=& \left \{ v \in {\Bbb R}^n \ | \ \liminf_{y \to x, y \ne x} \frac{f(y) - f(x) - \langle v, y - x \rangle}{\|y - x\|} \ge 0 \right \}
\end{eqnarray*}
whenever $x \in \mathrm{dom} f,$ and by $\widehat{\partial}  f({x}) := \emptyset$ otherwise.

(ii) The {\em limiting subdifferential} at $x \in U,$ denoted by ${\partial} f(x),$ is the set of all cluster
points of sequences $\{v_k\}_{k \ge 1}$ such that $v_k \in \widehat{\partial} f(x_k)$ and $(x_k, f(x_k)) \to (x, f(x))$ as $k \to \infty.$
}\end{definition}

\begin{remark}{\rm
(i) If the function $f$ is of class $C^1,$ the above notion coincides with the usual concept of gradient; that is, $\widehat{\partial} f(x) = {\partial} f(x) = \{\nabla f(x) \}.$

(ii) It is a well-known result of variational analysis that $\widehat{\partial} f(x)$ (and a fortiori) $\partial f(x)$ are not empty in a dense subset of the domain of $f.$
}\end{remark}

The following lemmas are well known.

\begin{lemma} \label{Lemma2.7}
For any point $\overline{x} \in \mathbb{R}^n,$ we have
\begin{eqnarray*}
\partial \|\cdot - \overline{x}\| (x) &=& 
\begin{cases}
\mathbb{B} & \quad \textrm{ if } x = \overline{x}, \\
\frac{x - \overline{x}}{\|x - \overline{x}\| } & \quad \textrm{ otherwise.}
\end{cases}
\end{eqnarray*}
\end{lemma}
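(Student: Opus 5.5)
The plan is to treat the two cases $x \ne \overline{x}$ and $x = \overline{x}$ separately. Throughout, set $g(y) := \|y - \overline{x}\|$. This $g$ is convex and globally Lipschitz with constant $1$.

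For $x \neq \overline{x}$, the function $g$ is of class $C^1$ on the open set $\mathbb{R}^n \setminus \{\overline{x}\}$. Its gradient there is obtained by differentiating $\sqrt{\langle y - \overline{x}, y - \overline{x}\rangle}$:
$$\nabla g(y) = \frac{y - \overline{x}}{\|y - \overline{x}\|}.$$
The Fr\'echet and limiting subdifferentials are local notions. The limiting one only involves points $x_k \to x$, which eventually lie in a neighbourhood of $x$ not containing $\overline{x}$. So part (i) of the Remark applies and gives $\widehat{\partial} g(x) = \partial g(x) = \{(x - \overline{x})/\|x - \overline{x}\|\}$.

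For $x = \overline{x}$, I will first compute $\widehat{\partial} g(\overline{x})$ directly from the definition. For $v \in \mathbb{R}^n$ and $y \ne \overline{x}$, write $y = \overline{x} + t u$ with $t > 0$ and $\|u\| = 1$. The difference quotient becomes
$$\frac{\|y - \overline{x}\| - \langle v, y - \overline{x}\rangle}{\|y - \overline{x}\|} = 1 - \langle v, u\rangle.$$
Its $\liminf$ as $y \to \overline{x}$ is $\inf_{\|u\| = 1}(1 - \langle v, u\rangle) = 1 - \|v\|$. This is $\ge 0$ exactly when $\|v\| \le 1$, so $\widehat{\partial} g(\overline{x}) = \mathbb{B}$.

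It remains to check that the limiting subdifferential adds nothing, that is, $\partial g(\overline{x}) = \mathbb{B}$. The inclusion $\supseteq$ follows by taking constant sequences $x_k = \overline{x}$. For $\subseteq$, take any $v_k \in \widehat{\partial} g(x_k)$ with $x_k \to \overline{x}$. By the two computations above, either $v_k$ is a unit vector (when $x_k \ne \overline{x}$) or $v_k \in \mathbb{B}$. Since $\mathbb{B}$ is closed, every cluster point of $\{v_k\}$ lies in $\mathbb{B}$.

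The main obstacle is only bookkeeping. One must observe that the limiting construction cannot enlarge the set beyond the closed ball, and this holds because all Fr\'echet subgradients of $g$ have norm at most $1$. Alternatively, one could invoke the fact that for convex functions both subdifferentials coincide with the convex-analysis subdifferential, but the direct argument above is self-contained.
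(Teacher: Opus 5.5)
Your proof is correct. The paper itself gives no proof of this lemma: it is listed among facts that are ``well known'' and left to the standard references (Mordukhovich, Rockafellar--Wets), so your argument is a genuinely different route from what the paper relies on.

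The route those references would take is through convexity. For a convex function, both $\widehat{\partial}$ and $\partial$ coincide with the subdifferential of convex analysis. The convex subdifferential of a norm at the origin is the unit ball of the dual norm, which here is $\mathbb{B}$ since the Euclidean norm is self-dual. Away from $\overline{x}$ the norm is smooth.

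You instead compute directly from the definitions in Section~2. The difference quotient at $\overline{x}$ reduces to $1 - \langle v, u\rangle$, independent of $t$, which gives $\widehat{\partial} g(\overline{x}) = \mathbb{B}$ at once. The observation that every Fr\'echet subgradient of $g$ has norm at most $1$ then settles the limiting step. Your version is self-contained and uses nothing beyond what the paper defines. The convex route is shorter once the coincidence theorem is accepted, and it extends verbatim to arbitrary norms.

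In both Ekeland arguments of Theorem~\ref{DL1}, the paper only uses the inclusion $\partial \|\cdot - x'_k\|(x'_k) \subset \mathbb{B}$, to conclude $\|\nabla f(x'_k)\| \le \epsilon/\lambda$. That inclusion follows from the $1$-Lipschitz property alone. Any Fr\'echet subgradient $v$ of an $L$-Lipschitz function satisfies $\|v\| \le L$: test the definition along $y = x + t v/\|v\|$, $t \downarrow 0$. So your ``bookkeeping'' step is exactly the part of the lemma the paper actually needs.
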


\begin{lemma}[Fermat's rule] \label{Lemma2.8}
Let $f \colon U \to {\mathbb{R}}$ be a lower semicontinuous function, where $U$ is an open subset of $\mathbb{R}^n.$ If $\overline{x} \in U$ is a local minimum of $f$ then $0 \in \partial f(\overline{x}).$
\end{lemma}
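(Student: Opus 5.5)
The proof goes straight from the definitions: first show that a local minimizer has $0$ in the Fréchet subdifferential, then pass to the limiting subdifferential using a constant sequence.

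Since $\overline{x}$ is a local minimum of $f$ on the open set $U$, there is $\delta > 0$ with $\mathbb{B}_\delta(\overline{x}) \subset U$ and $f(y) \ge f(\overline{x})$ for all $y \in \mathbb{B}_\delta(\overline{x})$. In particular $f(\overline{x})$ is finite, because $f$ is real-valued on $U$, so $\overline{x} \in \mathrm{dom} f$ and the Fréchet subdifferential at $\overline{x}$ is given by the difference-quotient formula rather than being empty by convention.

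We check that $v = 0$ lies in $\widehat{\partial} f(\overline{x})$. For every $y \in \mathbb{B}_\delta(\overline{x})$ with $y \ne \overline{x}$ we have
\begin{eqnarray*}
\frac{f(y) - f(\overline{x}) - \langle 0, y - \overline{x} \rangle}{\|y - \overline{x}\|} &=& \frac{f(y) - f(\overline{x})}{\|y - \overline{x}\|} \ \ge\ 0 .
\end{eqnarray*}
The $\liminf$ as $y \to \overline{x}$, $y \ne \overline{x}$, only involves points eventually inside $\mathbb{B}_\delta(\overline{x})$, so it is nonnegative. Hence $0 \in \widehat{\partial} f(\overline{x})$.

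To conclude $0 \in \partial f(\overline{x})$, take the constant sequences $x_k := \overline{x}$ and $v_k := 0$. Then $v_k \in \widehat{\partial} f(x_k)$ for every $k$ by the previous step. Trivially $(x_k, f(x_k)) = (\overline{x}, f(\overline{x})) \to (\overline{x}, f(\overline{x}))$, and $0$ is a cluster point of $\{v_k\}$. By the definition of the limiting subdifferential, $0 \in \partial f(\overline{x})$.

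There is no real obstacle here. The only point to watch is that $\overline{x}$ must lie in $\mathrm{dom} f$, which holds because $f$ is real-valued. Lower semicontinuity is never used: it appears in the statement only because the subdifferentials are defined for lower semicontinuous functions.
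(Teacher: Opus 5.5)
Your proof is correct: $0\in\widehat{\partial} f(\overline{x})$ follows directly from the definition at a local minimizer, and the constant sequences $x_k=\overline{x}$, $v_k=0$ then place $0$ in $\partial f(\overline{x})$. The paper gives no proof of its own, citing the lemma as well known from the variational analysis literature (Mordukhovich, Rockafellar--Wets), and your argument is the standard one found there.
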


\begin{lemma}\label{Lemma2.9} 
Let $f_i \colon \mathbb{R}^n \to {\mathbb{R}}$, $i=1,\dots,m$ with $m \geqslant 2,$ be lower semicontinuous at $\overline{x} \in \mathbb{R}^n$  and let all but one of these functions be Lipschitz around $\overline{x}.$ Then the following inclusion holds:
\begin{eqnarray*}
\partial \left( f_1+\cdots+f_m\right)(\overline{x}) & \subset & \partial f_1(\overline{x})+\cdots+\partial f_m(\overline{x}).
\end{eqnarray*}
\end{lemma}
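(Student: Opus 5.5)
The statement to prove is Lemma~\ref{Lemma2.9}: the limiting subdifferential of a sum is contained in the sum of the limiting subdifferentials, provided all but one summand is Lipschitz around $\overline{x}$. By induction on $m$ it suffices to treat $m = 2$. Write $f = f_1 + f_2$ with $f_2$ Lipschitz around $\overline{x}$ with constant $L$. The induction step then goes as follows: in $f_1 + \cdots + f_m$ the sum of the Lipschitz summands is Lipschitz, so it can be grouped as one function. If instead $f_1$ is the non-Lipschitz summand and we peel it off, the remaining sum $f_2 + \cdots + f_m$ is Lipschitz, and its subdifferential splits again by the induction hypothesis, since all of its terms are Lipschitz.

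For $m=2$, I would first prove a \emph{fuzzy sum rule} for Fr\'echet subdifferentials. If $v \in \widehat{\partial}(f_1+f_2)(x)$, then for every $\varepsilon>0$ there exist $x_1, x_2$ within $\varepsilon$ of $x$, with $|f_i(x_i) - f_i(x)| < \varepsilon$, and $v_i \in \widehat{\partial} f_i(x_i)$ such that $\|v - v_1 - v_2\| \le \varepsilon$.

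The standard route is as follows. The function $y \mapsto f_1(y) + f_2(y) - \langle v, y - x\rangle + \varepsilon\|y-x\|$ has a local minimum at $x$. We decouple it by minimizing
\[
\varphi_k(y_1,y_2) := f_1(y_1) + f_2(y_2) - \langle v, y_1 - x\rangle + \varepsilon\|y_1 - x\| + k\|y_1-y_2\|^2
\]
over a small compact ball. Lower semicontinuity gives minimizers $(y_1^k,y_2^k)$, and the usual penalty argument shows $(y_1^k,y_2^k) \to (x,x)$, $f_i(y_i^k)\to f_i(x)$, and $k\|y_1^k-y_2^k\|^2\to 0$. At a minimizer, Fermat's rule (Lemma~\ref{Lemma2.8}) holds separately in each variable. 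The $y_2$-part is $f_2$ plus a smooth term, which gives $-2k(y_2^k - y_1^k) \in \widehat{\partial} f_2(y_2^k)$ directly from the definition. The $y_1$-part involves $f_1$ plus a smooth term plus the norm term; Lemma~\ref{Lemma2.7} bounds the norm's subgradients by $\mathbb{B}$, which produces the $\varepsilon$ error. Care is needed here to stay with Fr\'echet subgradients: I would replace $\varepsilon\|\cdot - x\|$ by the smooth $\varepsilon\|\cdot-x\|^2$ after shrinking the neighbourhood, or else accept a limiting version and diagonalize.

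Then take $v \in \partial f(\overline{x})$, so $v = \lim v_k$ with $v_k \in \widehat{\partial} f(x_k)$ and $(x_k,f(x_k)) \to (\overline{x}, f(\overline{x}))$. Since $f_2$ is continuous, $f_1(x_k) \to f_1(\overline{x})$ as well. Applying the fuzzy rule with $\varepsilon = 1/k$ gives $v_{1,k} + v_{2,k}$ close to $v_k$, with $v_{i,k} \in \widehat{\partial} f_i(x_{i,k})$, $x_{i,k} \to \overline{x}$ and $f_i(x_{i,k}) \to f_i(\overline{x})$. The key point is that Lipschitz continuity of $f_2$ gives $\|v_{2,k}\| \le L$, so a subsequence converges to some $v_2 \in \partial f_2(\overline{x})$. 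Then $v_{1,k} \to v - v_2$, which lies in $\partial f_1(\overline{x})$. This proves $v \in \partial f_1(\overline{x}) + \partial f_2(\overline{x})$.

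I expect the main obstacle to be the fuzzy sum rule, specifically the convergence of the penalized minimizers together with the function values and the extraction of exact Fr\'echet subgradients. The boundedness step, which is exactly where the Lipschitz hypothesis enters, is easy by comparison. Alternatively, one may simply cite the sum rule from \cite{Mordukhovich2006, Rockafellar1998}, since the lemma is stated as well known.
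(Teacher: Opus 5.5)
Your overall architecture (reduction to $m=2$, a fuzzy sum rule for Fr\'echet subgradients by decoupling, then a limit using $\|v_{2,k}\|\le L$) is sound. The paper proves nothing here; it only quotes the rule from \cite{Mordukhovich2006, Rockafellar1998}. So a self-contained argument of this kind is a legitimate alternative, and your induction and final limiting step are fine as written.

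The gap is in the fuzzy rule, exactly where you flagged it, and your first remedy fails. Fr\'echet subgradients are not proximal: the defining inequality only controls an error $-o(\|y-x\|)$, which no quadratic can absorb. For $\phi(y)=-|y|^{3/2}$ one has $0\in\widehat{\partial}\phi(0)$, yet $\phi+\varepsilon y^2$ has no local minimum at $0$. So whenever $y_1^k=x$, which you cannot exclude, your $y_1$-step yields only an $\varepsilon$-subgradient. ``Diagonalizing'' would then need the separate theorem that limiting subgradients are generated by $\varepsilon$-Fr\'echet subgradients. A clean fix is to give the kink its own variable and minimize
\[
f_1(y_1)+f_2(y_2)+2\varepsilon\|y_3-x\|-\langle v,y_1-x\rangle+k\|y_1-y_2\|^2+k\|y_1-y_3\|^2
\]
over $\mathbb{B}_\rho(x)^3$. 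Now $y_1$ and $y_2$ see only smooth perturbations, so exactly $v-2k(y_1^k-y_2^k)-2k(y_1^k-y_3^k)\in\widehat{\partial}f_1(y_1^k)$ and $2k(y_1^k-y_2^k)\in\widehat{\partial}f_2(y_2^k)$. The $y_3$-problem is convex and gives $\|2k(y_1^k-y_3^k)\|\le 2\varepsilon$ even at the kink. Note the factor $2\varepsilon$: with $\varepsilon$ as in your $\varphi_k$, $x$ need not be a \emph{strict} local minimizer. The penalized minimizers could then accumulate at other minimizers, possibly on the boundary of the ball, where Fermat's rule is unavailable.

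There is also a hypothesis issue. Existence of penalized minimizers near the points $x_k\to\overline{x}$ needs $f_1$ lower semicontinuous \emph{around} $\overline{x}$, not merely at $\overline{x}$ as printed. You should state this, because in the printed form the lemma is false. Let $\{q_j\}_{j\ge 0}$ enumerate $\mathbb{Q}\cap(-1,1)$ with $q_0=0$, and set $h(x):=\sum_j 2^{-j}|x-q_j|$, which is convex and $2$-Lipschitz. Put $f_2:=h$, and let $f_1:=-h$ on $\{q_j\}$ and $f_1:=1-h$ elsewhere. Then $f_1$ is lsc at $0$, but $\widehat{\partial}f_1(z)=\emptyset$ for all $z\in(-1,1)$:
\begin{enumerate}[{\rm (a)}]
\item off $\{q_j\}$, because $f_1$ is not lsc there;
\item at $q_j$, because by density and continuity of $h$ a Fr\'echet subgradient of $f_1$ would be one of the concave function $-h$, which has a kink at $q_j$.
\end{enumerate}
Thus $\partial f_1(0)=\emptyset$, while $f_1+f_2\ge 0=(f_1+f_2)(0)$ gives $0\in\partial(f_1+f_2)(0)$. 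This does not affect the paper, where the rule is applied only to a $C^1$ function plus a norm.
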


\section{The main result and its corollaries} \label{Section3}

Throughout this section, let $f \colon \mathbb{R}^n \to \mathbb{R}$ be a function of class $C^r$ ($r \ge 1$) in a neighbourhood $V$ of a point $\overline{x}$ in $\mathbb{R}^n$ and let $\Sigma \subset \mathbb{R}^n$ be a locally closed set at $\overline{x}$ such that $\{x \in V \mid \nabla f(x) = 0\} \subset \Sigma.$ 

Denote by $T^rf({z})$ the $r$th Taylor polynomial of $f$ at a point ${z} \in V,$ i.e., 
\begin{eqnarray*}
T^rf({z})(x) &:=& \sum_{j = 1}^r \frac{1}{j!} \sum_{i_1 = 1, \ldots, i_j = 1}^n \frac{\partial^j f}{\partial x_{i_1} \cdots \partial x_{i_j}}({z}) \, (x_{i_1 } - {z}_{i_1}) \cdots (x_{i_j} - {z}_{i_j}) 
\end{eqnarray*}
with $x := (x_1, \ldots, x_n) \in  \mathbb{R}^n.$ 

Let $\mathcal{H}_r^{\Sigma}(f, \overline{x}; \overline{w})$ be the {\em horn-neighbourhood} of the level set $f^{-1}(f(\overline{x}))$ relative to $\Sigma$ of degree $r$ and width $\overline{w},$
\begin{eqnarray*}
\mathcal{H}_r^{\Sigma}(f, \overline{x}; \overline{w}) &:=& \{x \in \mathbb{R}^n \mid |f(x) - f(\overline{x})| \le \overline{w}\, \mathrm{dist}(x, \Sigma)^r\}.
\end{eqnarray*}
The original notion of this horn-neighbourhood was introduced in \cite{Kuo1969-2} (see also \cite{Bekka2024}), for $\Sigma = \{\overline{x}\}.$

Let $\mathcal{F}_r^{\Sigma}(\overline{x})$ denote the set of all functions $g  \colon \mathbb{R}^n \to \mathbb{R}$ 
of class $C^r$ in some neighbourhood $U$ of $\overline{x}$ such that $T^r g(z) \equiv 0$ for all $z \in \Sigma \cap U.$ 
The following lemma is useful in the sequel.

\begin{lemma}\label{BD31}
For any $g \in \mathcal{F}_r^{\Sigma}(\overline{x}),$ there are positive constants $c$ and $\delta$ such that
\begin{eqnarray*}
|g(x)| &\le& c\, \mathrm{dist}(x, \Sigma)^r \quad \textrm{ and } \quad 
|\nabla g(x)| \ \le \ c\,\mathrm{dist}(x, \Sigma)^{r - 1}
\end{eqnarray*}
for all $x \in \mathbb{B}_\delta(\overline{x}).$
\end{lemma}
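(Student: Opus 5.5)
The plan is Taylor's formula with integral remainder around a nearest point of $\Sigma$, combined with uniform continuity of the top-order derivatives. Since $\Sigma$ is locally closed at $\overline{x}$, there is a closed neighbourhood $W$ of $\overline{x}$ with $\Sigma\cap W$ closed. First handle the degenerate case: if $\overline{x}\notin\mathrm{cl}\,\Sigma$, then $\mathrm{dist}(x,\Sigma)$ is bounded below by a positive constant near $\overline{x}$. Both inequalities then follow from boundedness of $g$ and $\nabla g$ on a compact ball, so assume $\overline{x}\in \mathrm{cl}\,\Sigma$. Because $\Sigma\cap W$ is closed, this gives $\overline{x}\in\Sigma$.

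Choose $\rho>0$ with $\mathbb{B}_{\rho}(\overline{x})\subset U\cap W$, where $U$ is the neighbourhood from the definition of $\mathcal{F}_r^{\Sigma}(\overline{x})$. Take $\delta=\rho/3$. For $x\in\mathbb{B}_\delta(\overline{x})$ we have $\mathrm{dist}(x,\Sigma)\le\|x-\overline{x}\|\le\delta$. Points of $\Sigma$ within distance $\delta$ of $x$ lie in $\mathbb{B}_{2\delta}(\overline{x})\subset W$, so the infimum defining $\mathrm{dist}(x,\Sigma)$ is attained at some $z\in\Sigma\cap\mathbb{B}_{2\delta}(\overline{x})$, by closedness of $\Sigma\cap W$. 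Thus $\|x-z\|=\mathrm{dist}(x,\Sigma)$, and the segment $[z,x]$ lies in $\mathbb{B}_{3\delta}(\overline{x})\subset U$.

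Now $T^rg(z)\equiv0$, and $T^rg(z)$ as written omits the constant term $g(z)$. The hypothesis should therefore be read as saying that all derivatives of $g$ of orders $0,\dots,r$ vanish at $z$; the order-$0$ part needs $g|_{\Sigma\cap U}=0$, which is presumably the intended meaning. I will also need $g(z)=0$ explicitly, and this is the one point I would clarify. Apply Taylor's formula of order $r-1$ with integral remainder to $g$ along $[z,x]$. Write $h=x-z$, and note that all terms of order $\le r-1$ vanish at $z$:
\[
g(x)=\int_0^1\frac{(1-t)^{r-1}}{(r-1)!}\,D^rg(z+th)[h,\dots,h]\,dt .
\]
Since $D^rg$ is continuous on the compact ball $\mathbb{B}_{3\delta}(\overline{x})$, it is bounded there by some $M$. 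Hence $|g(x)|\le \frac{M}{r!}\|h\|^r=\frac{M}{r!}\mathrm{dist}(x,\Sigma)^r$.

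For the gradient, apply the same argument to each $\partial g/\partial x_i$, which is $C^{r-1}$. Its derivatives of orders $0,\dots,r-1$ at $z$ are derivatives of $g$ of orders $1,\dots,r$, so they all vanish. Taylor's formula of order $r-2$ with integral remainder then gives $|\partial_i g(x)|\le \frac{M}{(r-1)!}\|h\|^{r-1}$. In the edge case $r=1$ the bound reads $|\nabla g(x)|\le c$, which is just boundedness of $\nabla g$. Summing over $i$ and taking $c$ to be the maximum of the resulting constants gives both estimates.

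The main obstacle is the geometric step: making sure the nearest point $z$ exists, lies in $\Sigma\cap U$, and that the whole segment stays in the region where $D^rg$ is bounded. This is exactly what local closedness and the choice $\delta=\rho/3$ provide. The rest is routine Taylor estimation.
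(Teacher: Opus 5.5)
Your argument is correct and is exactly the Taylor-formula argument the paper invokes by citing Bekka--Koike. Your caveat is also well founded: the paper's $T^r g(z)$ omits the constant term, so the literal hypothesis does not give $g(z)=0$, and $g\equiv 1$ would contradict the first estimate whenever $\overline{x}\in\Sigma$. One remark: the paper later applies this lemma with an arbitrarily small constant after shrinking $\delta$ (e.g.\ in (ii)$\Rightarrow$(i), (v)$\Rightarrow$(i) and Corollary~\ref{HQ1}). Your integral formula yields that stronger form once you use $D^r g(z)=0$ together with the uniform continuity of $D^r g$ on the compact ball; you announced that ingredient but only needed boundedness here.
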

\begin{proof}
It is a consequence of the Taylor formula for $C^r$-functions and the assumption on $g.$ See, for example, \cite[Lemma~2.4]{Bekka2024}.
\end{proof}

Inspired by the works in \cite{Bekka2024, Bochnak1971, Kuiper1972, Kuo1969, Kuo1969-2, Kuo1972, Migus2016, Xu2007}, the main result of this paper is as follows.

\begin{theorem}\label{DL1}
If $\overline{x}$ is a local minimum of $f,$ then the following statements are equivalent:
\begin{enumerate}[{\rm (i)}]
\item \label{Item1} For all $g \in \mathcal{F}_r^{\Sigma}(\overline{x}),$ $\overline{x}$ is a local minimum of $f + g.$

\item \label{Item2} There are positive constants $c$ and $\delta$ such that
\begin{eqnarray*}
f(x) - f(\overline{x}) &\ge& c\, \mathrm{dist}(x, \Sigma)^{r} \quad \textrm{ for all } \quad  x \in \mathbb{B}_\delta(\overline{x}).
\end{eqnarray*}

\item \label{Item3} There are positive constants $c$ and $\delta$ such that
\begin{eqnarray*}
\|\nabla f(x)\| &\ge& c\, \mathrm{dist}(x, \Sigma)^{r - 1} \quad \textrm{ for all } \quad  x \in \mathbb{B}_\delta(\overline{x}).
\end{eqnarray*}

\item \label{Item4} There are positive constants $c$ and $\delta$ such that
\begin{eqnarray*}
\mathrm{dist}(x, \Sigma) \|\nabla f(x)\| + |f(x) - f(\overline{x})| &\ge& c\, \mathrm{dist}(x, \Sigma)^{r} \quad \textrm{ for all } \quad  x \in \mathbb{B}_\delta(\overline{x}).
\end{eqnarray*}

\item \label{Item5} There are positive constants $c, \delta$ and $\overline{w}$ such that
\begin{eqnarray*}
\|\nabla f(x)\| &\ge& c\, \mathrm{dist}(x, \Sigma)^{r - 1} \quad \textrm{ for all } \quad  x \in \mathcal{H}_r^{\Sigma}(f, \overline{x}; \overline{w}) \cap \mathbb{B}_\delta(\overline{x}).
\end{eqnarray*}
\end{enumerate}

\end{theorem}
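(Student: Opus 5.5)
We prove the cycle (i)$\Rightarrow$(ii)$\Rightarrow$(iii)$\Rightarrow$(v)$\Rightarrow$(iv)$\Rightarrow$(ii)$\Rightarrow$(i). Write $d(x) := \mathrm{dist}(x,\Sigma)$ and normalize $f(\overline{x})=0$. Two preliminary observations are used repeatedly.
\begin{itemize}
\item Since $\overline{x}$ is a local minimum, $\nabla f(\overline{x})=0$, so $\overline{x}\in\Sigma$.
\item Since $\Sigma$ is locally closed, for $x$ near $\overline{x}$ every nearest point of $\Sigma\cap V$ to $x$ lies close to $\overline{x}$.
\end{itemize}

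\emph{(ii)$\Rightarrow$(i).} Given $g\in\mathcal{F}_r^{\Sigma}(\overline{x})$, Lemma~\ref{BD31} gives $|g(x)|\le c'd(x)^r$ near $\overline{x}$. Applying it to $\epsilon g$ shows that this alone cannot be absorbed into $c\,d(x)^r$ for a fixed $c$. We therefore use the freedom of the statement: (i) is asserted for \emph{all} $g$, and (ii) is only used in the direction (ii)$\Rightarrow$(i). So we first prove the sharper claim that (ii) forces $f(x)\ge c\,d(x)^r$ with $c$ beating the constant for $g$.

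This does not work directly. The honest route is to use the stronger information in $g$: since $T^rg(z)\equiv0$ on $\Sigma$, Taylor's formula with the uniform continuity of $D^rg$ gives $|g(x)|=o(d(x)^r)$ as $x\to\overline{x}$, not merely $O(d(x)^r)$. Concretely, $|g(x)|\le \omega(\|x-\overline{x}\|+d(x))\,d(x)^r$, where $\omega$ is the modulus of continuity of $D^rg$ and $D^rg(z)=0$ on $\Sigma$. Hence $f+g\ge (c-\omega)d^r\ge0$ near $\overline{x}$, and (i) holds.

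\emph{(i)$\Rightarrow$(ii).} Argue by contradiction. Take $x_k\to\overline{x}$ with $f(x_k)<d(x_k)^r/k$. Then $x_k\notin\Sigma$, since otherwise $f(x_k)<0$ would contradict minimality. Build $g=-\sum_k \phi_k$, where $\phi_k$ is a bump equal to $2d(x_k)^r/k$ near $x_k$ and is supported in a ball of radius about $d(x_k)/2$. After passing to a subsequence, these supports are disjoint and avoid $\Sigma$. Scaling gives $\|D^j\phi_k\|\lesssim d(x_k)^{r-j}/k$, so $g$ is $C^r$ with all derivatives up to order $r$ vanishing on $\Sigma$; flatness at the accumulation point $\overline{x}$ follows from the factor $1/k$. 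Then $(f+g)(x_k)<0=(f+g)(\overline{x})$, contradicting (i). This construction is the main technical obstacle: checking that $g\in C^r$ and that $T^rg\equiv0$ on $\Sigma\cap U$.

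\emph{(ii)$\Rightarrow$(iii).} Fix $x$ near $\overline{x}$ with $d=d(x)>0$. Minimize $\varphi(y)=f(y)+\lambda\|y-x\|$ over $\mathbb{B}_{d/2}(x)$, with $\lambda$ chosen small relative to $d^{r-1}$; the variational tools are Lemmas~\ref{Lemma2.7}--\ref{Lemma2.9}. A cleaner alternative is a gradient-flow argument. The trajectory $y(t)$ of $-\nabla f/\|\nabla f\|$ starting at $x$ decreases $f$ at rate $\|\nabla f\|$. On $\mathbb{B}_{d/2}(x)$ we have $d(y)\ge d/2$, hence $f(y)\ge c(d/2)^r$. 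If $\|\nabla f\|<\epsilon d^{r-1}$ there, run the Ekeland variational principle on $f$ over $\mathbb{B}_{d/2}(x)$ with $\lambda=2f(x)/d$. This gives a point $y$ with $\|\nabla f(y)\|\le\lambda$ and $\|y-x\|<d/2$. Combined with the upper bound $f(x)\lesssim$ (what is needed) this becomes delicate. Instead we prove (ii)$\Rightarrow$(iii) on the horn, i.e. (ii)$\Rightarrow$(v), where it is easy because (ii) makes the horn with $\overline{w}<c$ reduce to $\Sigma$ locally. Then we show (v)$\Rightarrow$(iii): off the horn, $f>\overline{w}d^r$. Apply Ekeland with $\lambda\approx f(x)/d$ to get a nearby point $y$, lying on the horn or in a region where $f$ is small, with $\|\nabla f(y)\|\le\lambda$. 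Transferring the gradient bound back from $y$ to $x$ requires a Łojasiewicz-type comparison; this is the second delicate step.

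\emph{(iii)$\Rightarrow$(iv).} This is immediate.

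\emph{(iv)$\Rightarrow$(ii).} On the horn with $\overline{w}=c/2$, (iv) gives $\|\nabla f\|\ge (c/2)d^{r-1}$, which is (v). Off the horn, $|f|>\overline{w}d^r$, and $f\ge0$ near the minimum, so $f\ge\overline{w}d^r$. On the horn we get the same bound by integrating along the normalized gradient flow from $x$ out to the boundary of the horn. The key ingredient is the differential inequality $\frac{d}{dt}f\ge \frac{c}{2}d^{r-1}$ while $d$ changes at most at unit speed. This yields $f(x)\ge c''d(x)^r$, i.e. (ii). The same argument proves (v)$\Rightarrow$(ii), closing the cycle.
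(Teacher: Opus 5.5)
\textbf{Gap: no implication into (iii).} Your write-up never actually proves anything that ends at (iii). Here is what your arguments do establish:
\begin{itemize}
\item (i)$\Leftrightarrow$(ii). Your bump $-\sum_k\phi_k$, with height $2\,\mathrm{dist}(x_k,\Sigma)^r/k$ and radius about $\mathrm{dist}(x_k,\Sigma)/2$, is a correct construction. Your $o(\mathrm{dist}(x,\Sigma)^r)$ Taylor bound for $g$ is exactly what (ii)$\Rightarrow$(i) needs.
\item The trivial implications (ii)$\Rightarrow$(iv),(v) and (iii)$\Rightarrow$(iv)$\Rightarrow$(v).
\item (v)$\Rightarrow$(ii) by the flow argument. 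Run it along $-\nabla f/\|\nabla f\|$. Either the trajectory leaves the horn before time $\mathrm{dist}(x,\Sigma)/2$, or $f$ drops by at least $\frac c2(\mathrm{dist}(x,\Sigma)/2)^{r-1}\cdot\mathrm{dist}(x,\Sigma)/2$ while staying $\ge f(\overline{x})$. This is the paper's Ekeland mechanism in another form.
\end{itemize}
The paragraph meant to give (ii)$\Rightarrow$(iii) abandons the Ekeland attempt. It ends by deferring (v)$\Rightarrow$(iii) to an unspecified ``{\L}ojasiewicz-type comparison''. Ekeland and flow arguments only produce \emph{some} nearby point with small gradient. They cannot bound $\|\nabla f\|$ from below at a prescribed point, so this is a real missing step, not a detail.

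\textbf{The step cannot be supplied as stated.} Take $n=1$, $\overline{x}=0$, $f(x)=x^4$, $r=2$, and $\Sigma=\{0\}\cup\{\pm 1/j : j\ge 1\}$. This $\Sigma$ is closed and contains the critical set $\{0\}$. For $x\in[\frac{1}{j+1},\frac{1}{j}]$ we have $\mathrm{dist}(x,\Sigma)\le\frac{1}{2j(j+1)}$, so $f(x)\ge (j+1)^{-4}\ge\mathrm{dist}(x,\Sigma)^2$. Thus (ii) holds, and hence so do (i), (iv) and (v). At the midpoints $m_j$ of these intervals, however, $|f'(m_j)|/\mathrm{dist}(m_j,\Sigma)=8j(j+1)m_j^3\to 0$, so (iii) fails.

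\textbf{The paper's route does not close this gap either.} The paper reaches (iii) via (i)$\Rightarrow$(iii), using a bump equal to $2(f(\overline{x})-f)$ near the bad points $x_k$ together with the Bochnak--{\L}ojasiewicz inequality. It only checks that $g(x)=o(\|x-\overline{x}\|^r)$. Membership in $\mathcal{F}_r^{\Sigma}(\overline{x})$ actually forces $g(x)=o(\mathrm{dist}(x,\Sigma)^r)$, by your own Taylor estimate. But whenever (ii) holds, that bump has $|g(x_k)|=2(f(x_k)-f(\overline{x}))\ge 2c\,\mathrm{dist}(x_k,\Sigma)^r$. In the example above it is not $C^2$-flat at $0$.

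\textbf{What is true.} For general $\Sigma$, (iii) is strictly stronger than the other four conditions. Those four your arguments correctly show to be equivalent, once you add the trivial (ii)$\Rightarrow$(iv),(v). For $\Sigma=\{\overline{x}\}$, the comparison you were looking for does exist and gives (ii)$\Rightarrow$(iii) directly. Let $T=T^rf(\overline{x})$.
\begin{itemize}
\item Condition (ii) yields $T(x)-T(\overline{x})\ge\frac c2\|x-\overline{x}\|^r$ near $\overline{x}$.
\item Bochnak--{\L}ojasiewicz gives $|T(x)-T(\overline{x})|\le c_1\|x-\overline{x}\|\,\|\nabla T(x)\|$.
\item Also $\|\nabla(f-T)(x)\|=o(\|x-\overline{x}\|^{r-1})$.
\end{itemize}
Together these give $\|\nabla f(x)\|\ge\frac{c}{4c_1}\|x-\overline{x}\|^{r-1}$ near $\overline{x}$.
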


\begin{proof}
The implications \eqref{Item3} $\Rightarrow$ \eqref{Item4} $\Rightarrow$ \eqref{Item5} are obvious.

\eqref{Item1} $\Rightarrow$ \eqref{Item3}: 
Suppose on the contrary that there exists a sequence $x_k \in \mathbb{R}^n$ tending to $\overline{x}$ such that 
\begin{eqnarray}\label{Eqn1}
\|\nabla f({x}_k) \| & < & \frac{1}{k} \mathrm{dist}({x}_k, \Sigma)^{r - 1}.
\end{eqnarray}
Certainly $x_k \not \in \Sigma,$ and so $f(x_k) > f(\overline{x})$ for $k$ sufficiently large because otherwise $x_k$ is a local minimum point of $f,$ which yields $\nabla f(x_k) = 0$ (by Fermat's rule) and so $x_k \in \Sigma$ (by assumption) a contradiction.

Passing to a subsequence if necessary, we may assume that 
\begin{eqnarray*}
\mathrm{dist}({x}_{k + 1}, \Sigma)^{r - 1}  & < & \frac{1}{2} \mathrm{dist}({x}_k, \Sigma)^{r - 1}.
\end{eqnarray*}
Choose $0 < \delta_k \le \frac{1}{4} \mathrm{dist}({x}_k, \Sigma)^{r - 1}.$ Then $\mathbb{B}_{\delta_k}(x_k)$ is a family of pairwise disjoint closed balls. Shrinking $\delta_k$ if necessary, from \eqref{Eqn1} we can assume that
\begin{eqnarray} \label{Eqn2}
\|\nabla f({x}) \| & < & \frac{2}{k} \mathrm{dist}({x}, \Sigma)^{r - 1} \quad \textrm{ for all } \quad x \in \mathbb{B}_{\delta_k}(x_k).
\end{eqnarray}
Let $\alpha \colon \mathbb{R}^n \to \mathbb{R}$ be a $C^\infty$-function such that $\alpha(x) = 0$ for $\|x\| \ge \frac{1}{4}$ and $\alpha(x) = 2$ in some neighbourhood of $0 \in \mathbb{R}^n.$ Define the function $g \colon \mathbb{R}^n \to \mathbb{R}$ of class $C^r$ by 
\begin{eqnarray*}
g(x) & :=& 
\begin{cases}
\alpha\left( \frac{x - x_k}{\delta_k} \right) \big(f(\overline{x}) - f(x) \big) & \textrm{ if } x \in \mathbb{B}_{\delta_k}(x_k) \textrm{ for some } k, \\
0 & \textrm{ otherwise.}
\end{cases}
\end{eqnarray*}
We have 
\begin{eqnarray*}
f(x_k) + g(x_k) &=& f(x_k) + 2 \big(f(\overline{x}) - f(x_k) \big) \ = \ 
2 f(\overline{x}) - f(x_k) \ < \ f(\overline{x})  \ = \ f(\overline{x}) + g(\overline{x}),
\end{eqnarray*}
and so $\overline{x}$ is not a local minimum of $f + g.$ Moreover, by the construction of $g,$ we have $T^rg(x) \equiv 0$ for all $x \in \Sigma \setminus \{\overline{x}\},$ $x$ near $\overline{x}.$ Therefore, to get a contradiction, it suffices to show that
\begin{eqnarray}\label{Eqn3}
\lim_{x \to \overline{x}} \frac{g(x)}{\|x - \overline{x}\|^r} &=& 0.
\end{eqnarray}
To see this, write $f$ in the form $f = T + R,$ where $T := T^rf(\overline{x})$ is the $r$th Taylor polynomial of $f$ at $\overline{x}$ and $R$ is the remainder. Observe that
\begin{eqnarray} \label{Eqn4}
\lim_{x \to \overline{x}} \frac{R(x)}{\|x - \overline{x}\|^r} &=& 0 \quad \textrm{ and } \quad 
\lim_{x \to \overline{x}} \frac{\|\nabla R(x)\|}{\|x - \overline{x}\|^{r - 1}} \ = \ 0.
\end{eqnarray}
On the other hand, by the Bochnak--{\L}ojasiewicz inequality (see \cite{Bochnak1971}), there exists $c_1 > 0$ such that for all $x$ near $\overline{x}$ we have 
\begin{eqnarray*}
|T(x) - T(\overline{x})| &\le& c_1 \|x - \overline{x}\|\|\nabla T(x)\|.
\end{eqnarray*}
Hence
\begin{eqnarray*}
|f(x) - f(\overline{x})| &\le& 
|T(x) - T(\overline{x})| + |R(x)| \\
&\le&  c_1 \|x - \overline{x}\|\|\nabla T(x)\| + |R(x)| \\
& \le & c_1 \|x - \overline{x}\|\|\nabla f(x)\| + c_1 \|x- \overline{x}\|\|\nabla R(x)\| + |R(x)|.
\end{eqnarray*}
This, together with \eqref{Eqn2}, \eqref{Eqn4} and the boundedness of $\alpha,$ gives the desired equality~\eqref{Eqn3}.

\eqref{Item2} $\Rightarrow$ \eqref{Item1}: 
Assume that there are constants $c > 0$ and $\delta > 0$ such that
\begin{eqnarray*}
f(x) - f(\overline{x}) &\ge& c\, \mathrm{dist}(x, \Sigma)^{r} \quad \textrm{ for all } \quad  x \in \mathbb{B}_\delta(\overline{x}).
\end{eqnarray*}
Take any $g \in \mathcal{F}_r^{\Sigma}(\overline{x}).$ Then applying Lemma~\ref{BD31} (and shrinking $\delta$ if necessary), we have for all $x \in \mathbb{B}_\delta(\overline{x})$ that
\begin{eqnarray*}
|g(x)| &<& \frac{c}{2} \mathrm{dist}({x}, \Sigma)^{r},
\end{eqnarray*}
which yields
\begin{eqnarray*}
f(x) + g(x)  &\ge& f(\overline{x}) + \frac{c}{2} \mathrm{dist}({x}, \Sigma)^{r} \ \ge \ f(\overline{x}) \ = \ f(\overline{x}) + g(\overline{x}).
\end{eqnarray*}
Consequently, $\overline{x}$ is a local minimum of $f + g,$ as required.

\eqref{Item3} $\Rightarrow$ \eqref{Item2}: 
By contradiction, assume that there exists a sequence $x_k \in \mathbb{R}^n$ tending to $\overline{x}$ such that
\begin{eqnarray*}
f(x_k) - f(\overline{x}) &<& \frac{1}{k} \mathrm{dist}(x_k, \Sigma)^r.
\end{eqnarray*}
Reducing $\delta$ if necessary, we may assume that $\overline{x}$ is a minimizer of $f$ over the closed ball $\mathbb{B}_\delta(\overline{x}).$
Then for all $k$ sufficiently large, we have $x_k \in \mathbb{B}_\delta(\overline{x})$ and
\begin{eqnarray*}
f(\overline{x})  &=& \min_{\mathbb{B}_\delta(\overline{x})} f(x) \ \le \ f(x_k) \ < \ f(\overline{x}) + \frac{1}{k} \mathrm{dist}(x_k, \Sigma)^r.
\end{eqnarray*}
Consequently, $\mathrm{dist}(x_k, \Sigma) > 0.$

Applying the Ekeland variational principle (see \cite[Theorem~1.1]{Ekeland1974}) to the function $f$ and the complete metric space
$\mathbb{B}_\delta(\overline{x})$ with $\epsilon := \frac{1}{k} \mathrm{dist}(x_k, \Sigma)^r > 0, \lambda := \frac{1}{2} \mathrm{dist}(x_k, \Sigma) > 0,$ and the initial point $x_k,$ we find ${x}'_k \in \mathbb{B}_\delta(\overline{x})$ such that the following conditions hold:
\begin{enumerate}[{\rm (a)}]
\item $\|{x}'_k - x_k\|  \le \lambda,$
\item $f(\overline{x}) \le f({x}'_k) \le f({x}_k),$ and
\item $f(x) + \frac{\epsilon}{\lambda} \|x - {x}'_k\| \ge f({x}'_k)$ for all $x \in \mathbb{B}_\delta(\overline{x}).$
\end{enumerate}

We have
\begin{eqnarray*}
\|{x}'_k - \overline{x}\| &\le& 
\|{x}'_k - x_k\| + \|x_k - \overline{x}\| \ \le \ \lambda + \|x_k - \overline{x}\| \ \le \ \frac{3}{2} \|x_k - \overline{x}\|. 
\end{eqnarray*}
Consequently, for all $k$ sufficiently large we have ${x}'_k \in \mathrm{int} \mathbb{B}_\delta(\overline{x}).$ This, together with the condition (c), implies that ${x}'_k$ is a minimizer of the function  $f(x) + \frac{\epsilon}{\lambda} \|x - {x}'_k\|$ on the open ball $ \mathrm{int} \mathbb{B}_\delta(\overline{x}).$ By the Fermat rule (Lemma~\ref{Lemma2.8}), we get
\begin{eqnarray*}
0 &\in& \partial \left( f + \frac{\epsilon}{\lambda}\|\cdot - {x}'_k\|\right)({x}'_k).
\end{eqnarray*}
Since the function $\|\cdot-{x}'_k\|$ is 1-Lipschitz, the sum rule (see Lemma~\ref{Lemma2.9}) gives
\begin{eqnarray*}
0 &\in& \nabla f({x}'_k) + \frac{\epsilon}{\lambda} \partial (\|\cdot-{x}'_k\|)({x}'_k).
\end{eqnarray*}
Note that $\partial (\|\cdot-{x}'_k\|)({x}'_k)=\mathbb{B}$ (by Lemma~\ref{Lemma2.7}). Hence  $0 \in \nabla f({x}'_k)  + \frac{\epsilon}{\lambda}\mathbb{B},$ and so
\begin{eqnarray*}
\|\nabla f({x}'_k) \| & \leq & \frac{\epsilon}{\lambda} \ = \ \frac{2}{k} \frac{\mathrm{dist}(x_k, \Sigma)^r}{\mathrm{dist}(x_k, \Sigma)}
\ = \ \frac{2}{k} {\mathrm{dist}(x_k, \Sigma)^{r - 1}}.
\end{eqnarray*}
On the other hand, it follows from the condition (a) that 
\begin{eqnarray*}
\frac{1}{2} \mathrm{dist}(x_k, \Sigma) &\le& \mathrm{dist}({x}'_k, \Sigma) \ \le \ \frac{3}{2} \mathrm{dist}(x_k, \Sigma).
\end{eqnarray*}
Therefore,
\begin{eqnarray*}
\|\nabla f({x}'_k) \| & \leq &  \frac{2^r}{k} \mathrm{dist}({x}'_k, \Sigma)^{r - 1},
\end{eqnarray*}
which contradicts our assumption.

\eqref{Item5} $\Rightarrow$ \eqref{Item1}: 
Assume that there are positive constants $c, \delta$ and $\overline{w}$ such that
\begin{eqnarray*}
\|\nabla f(x)\| &\ge& c\, \mathrm{dist}(x, \Sigma)^{r - 1} \quad \textrm{ for all } \quad  x \in \mathcal{H}_r^{\Sigma}(f, \overline{x}; \overline{w}) \cap \mathbb{B}_\delta(\overline{x}).
\end{eqnarray*}

By contradiction, suppose that for some $g \in \mathcal{F}_r^{\Sigma}(\overline{x}),$ $\overline{x}$ is not a local minimum of $f + g.$ Then there exists a sequence $x_k \in \mathbb{R}^n$ tending to $\overline{x}$ such that
\begin{eqnarray*}
f(x_k) + g(x_k)  &<& f(\overline{x}) + g(\overline{x})  \ = \ f(\overline{x}).
\end{eqnarray*}
Reducing $\delta$ if necessary, we may assume that $\overline{x}$ is a minimizer of $f$ over the closed ball $\mathbb{B}_\delta(\overline{x}).$
Then for all $k$ sufficiently large, we have $x_k \in \mathbb{B}_\delta(\overline{x})$ and
\begin{eqnarray*}
f(\overline{x})  &=& \min_{\mathbb{B}_\delta(\overline{x})} f(x) \ \le \ f(x_k) \ < \ f(\overline{x}) - g(x_k).
\end{eqnarray*}
Consequently, $-g(x_k) > 0,$ and so $x_k \not \in \Sigma.$

Applying the Ekeland variational principle (see \cite[Theorem~1.1]{Ekeland1974}) to the function $f$ and the complete metric space
$\mathbb{B}_\delta(\overline{x})$ with $\epsilon := -g(x_k) > 0, \lambda := \frac{1}{2} \mathrm{dist}(x_k, \Sigma) > 0,$ and the initial point $x_k,$ we find ${x}'_k \in \mathbb{B}_\delta(\overline{x})$ such that
\begin{enumerate}[{\rm (a)}]
\item $\|{x}'_k - x_k\|  \le \lambda,$
\item $f(\overline{x}) \le f({x}'_k) \le f({x}_k),$ and
\item ${x}'_k$ is a minimizer of the function $f(x) + \frac{\epsilon}{\lambda} \|x - {x}'_k\|$ over $\mathbb{B}_\delta(\overline{x}).$
\end{enumerate}

We have
\begin{eqnarray*}
\|{x}'_k - \overline{x}\| &\le& 
\|{x}'_k - x_k\| + \|x_k - \overline{x}\| \ \le \ \lambda + \|x_k - \overline{x}\| \ \le \ \frac{3}{2} \|x_k - \overline{x}\|. 
\end{eqnarray*}
Consequently, we have for all $k$ sufficiently large that ${x}'_k \in \mathrm{int} \mathbb{B}_\delta(\overline{x}),$ which, together with the condition~(c), yields
\begin{eqnarray} \label{Eqn5}
f(x) + \frac{\epsilon}{\lambda} \|x - {x}'_k\| &\ge& f({x}'_k) \quad \textrm{ for all } \quad x \in \mathrm{int} \mathbb{B}_\delta(\overline{x}).
\end{eqnarray}

The condition (a) implies easily that 
\begin{eqnarray*}
\frac{1}{2} \mathrm{dist}(x_k, \Sigma) &\le& \mathrm{dist}({x}'_k, \Sigma) \ \le \ \frac{3}{2} \mathrm{dist}(x_k, \Sigma).
\end{eqnarray*}
Take any $0 < c' < \frac{1}{2^r} \min \{c, \overline{w}\}.$ Since $g \in \mathcal{F}_r^{\Sigma}(\overline{x}),$ we have for all $k$ sufficiently large
that (see Lemma~\ref{BD31})
\begin{eqnarray*}
|g(x_k)| &\le& c' \mathrm{dist}(x_k, \Sigma)^{r} \ \le \ 2^r c' \, \mathrm{dist}({x}'_k, \Sigma)^{r} \ \le \ \overline{w}\, \mathrm{dist}({x}'_k, \Sigma)^{r}.
\end{eqnarray*}
This, together with the condition (b), implies that
\begin{eqnarray*}
0 &\le& f({x}'_k) - f(\overline{x}) \ \le \ f(x_k) - f(\overline{x}) \ < \ -g(x_k) \ \le \ \overline{w}\, \mathrm{dist}({x}'_k, \Sigma)^{r},
\end{eqnarray*}
which yields ${x}'_k \in \mathcal{H}_r^{\Sigma}(f, \overline{x}; \overline{w}).$

The condition~\eqref{Eqn5}, together with the Fermat rule (Lemma~\ref{Lemma2.8}), implies that
\begin{eqnarray*}
0 &\in& \partial \left( f + \frac{\epsilon}{\lambda}\|\cdot - {x}'_k \|\right)({x}'_k).
\end{eqnarray*}
Since the function $\|\cdot - {x}'_k\|$ is 1-Lipschitz, the sum rule (see Lemma~\ref{Lemma2.9}) gives
\begin{eqnarray*}
0 &\in& \nabla f({x}'_k) + \frac{\epsilon}{\lambda} \partial (\|\cdot - {x}'_k\|)({x}'_k).
\end{eqnarray*}
Note that $\partial (\|\cdot-{x}'_k\|)({x}'_k)=\mathbb{B}$ (by Lemma~\ref{Lemma2.7}). Hence $0 \in \nabla f({x}'_k)  + \frac{\epsilon}{\lambda}\mathbb{B},$ and so
\begin{eqnarray*}
\|\nabla f({x}'_k) \| & \leq & \frac{\epsilon}{\lambda} \ = \ \frac{-2g(x_k)}{\mathrm{dist}(x_k, \Sigma)} \ \le \ 
 2 c'\, \mathrm{dist}(x_k, \Sigma)^{r - 1} \ < \  c\, \mathrm{dist}({x}'_k, \Sigma)^{r - 1},
\end{eqnarray*}
which contradicts our assumption.
\end{proof}

\begin{remark}{\rm
(i) In the general case (i.e., $\overline{x}$ is not necessary  a local minimum of $f$), we always have 
\eqref{Item3} $\Rightarrow$ \eqref{Item4} $\Leftrightarrow$ \eqref{Item5} but not
\eqref{Item4} $\Rightarrow$ \eqref{Item3},  see \cite[Example~4.6]{Bekka2024}.

(ii) By the works in \cite{Bekka2024, Kuiper1972, Kuo1969, Migus2016, Xu2007}, we know that \eqref{Item3} is equivalent to $\Sigma$-$C^0$-sufficiency of relative $r$-jets while \eqref{Item4} (and hence \eqref{Item5}) is equivalent to $\Sigma$-$V$-sufficiency of relative $r$-jets. These results, together with Theorem~\ref{DL1}, shows that the latter notions are also equivalent in the case where $\overline{x}$ is a local minimum of $f.$ Since we do not use this fact, we leave the details for the interested reader. 

(iii) If the function $f$ is analytic  in a neighbourhood of $\overline{x},$ then there are positive constants $c,$ $\delta$ and $\alpha$ satisfying the 
{\L}ojasiewicz inequality
\begin{eqnarray*}
\|\nabla f(x)\| &\ge& c\, \mathrm{dist}(x, \Sigma)^{\alpha} \quad \textrm{ for all } \quad  x \in \mathbb{B}_\delta(\overline{x})
\end{eqnarray*}
(see \cite{Bierstone1988}), and hence Item~\eqref{Item3} in Theorem~\ref{DL1} holds for $r \ge \alpha + 1.$
}\end{remark}

The following corollary asserts that the omission of terms of degree $\ge r + 1$ would not change the local minimum property of $f$ and that small enough perturbations even at degree $r$ would not either. Recall that $T^r f(z)$ stands for the $r$th Taylor polynomial of $f$ at $z.$

\begin{corollary}\label{HQ1}
Let $c, \delta$ and $\overline{w}$ be positive constants satisfying the {\L}ojasiewicz inequality
\begin{eqnarray*}
\|\nabla f(x)\| &\ge& c\, \mathrm{dist}(x, \Sigma)^{r - 1} \quad \textrm{ for all } \quad  x \in \mathcal{H}_r^{\Sigma}(f, \overline{x}; \overline{w}) \cap \mathbb{B}_\delta(\overline{x}).
\end{eqnarray*}
If in addition $T^rf(\cdot)$ is constant on $\Sigma \cap \mathbb{B}_\delta(\overline{x}),$ then the following hold:
\begin{enumerate}[{\rm (i)}]
\item $f$ has a local minimum at $\overline{x}$ if and only if $T^r f(\overline{x})$ has a local minimum at $\overline{x}.$ 

\item Assume that $f$ has a local minimum at $\overline{x}$ and let ${h} \colon \mathbb{R}^n \to \mathbb{R}$ be a function such that
\begin{eqnarray*}
|{h}(x) - {h}(\overline{x})| &\le& \epsilon \, \mathrm{dist}(x, \Sigma)^{r},
\end{eqnarray*}
where $\epsilon > 0$ sufficiently small, $x$ near $\overline{x}.$ Then $f + {h}$ has a local minimum at $\overline{x}.$ 
\end{enumerate}
\end{corollary}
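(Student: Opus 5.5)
The plan is to compare $f$ with the polynomial $T := T^rf(\overline{x})$ through the auxiliary function $g := T + f(\overline{x}) - f$. I will then use the Ekeland-type argument from the proof of \eqref{Item5} $\Rightarrow$ \eqref{Item1} in Theorem~\ref{DL1}, in both directions.

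\textbf{Step 1: $g \in \mathcal{F}_r^{\Sigma}(\overline{x})$, with arbitrarily small constants.} The Taylor polynomial as defined has no constant term. Constancy of $z \mapsto T^rf(z)$ on $\Sigma\cap\mathbb{B}_\delta(\overline{x})$ then gives $T^rf(z)(x) = T(x)$ for every $z \in \Sigma$ near $\overline{x}$. Evaluating at $x = z$ gives $T(z) = 0$. Since $T$ is a polynomial of degree $\le r$, we have $T^rT(z)(x) = T(x) - T(z) = T(x)$. Hence $T^rg(z) = T^rT(z) - T^rf(z) \equiv 0$ for $z \in \Sigma$ near $\overline{x}$, i.e.\ $g \in \mathcal{F}_r^{\Sigma}(\overline{x})$.

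I also need a sharper form of Lemma~\ref{BD31}. For every $\eta > 0$ there should be $\delta_\eta > 0$ with $|g(x)| \le \eta\,\mathrm{dist}(x,\Sigma)^r$ and $\|\nabla g(x)\| \le \eta\,\mathrm{dist}(x,\Sigma)^{r-1}$ on $\mathbb{B}_{\delta_\eta}(\overline{x})$. The reason is that the Taylor remainder of a $C^r$ function at a nearest point $z \in \Sigma$ is $o(\|x - z\|^r)$, uniformly for $z$ in a compact set, because $D^rg$ is uniformly continuous there. Getting this little-$o$ version is the point I expect to be the main technical obstacle. If one only has the constant $c$ of Lemma~\ref{BD31}, the transfer of the {\L}ojasiewicz inequality in Step 2 breaks down.

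\textbf{Step 2: part (i).} If $f$ has a local minimum at $\overline{x}$, then Theorem~\ref{DL1}, \eqref{Item5} $\Rightarrow$ \eqref{Item1}, applied with this $g$, shows that $f + g = T + f(\overline{x})$ has a local minimum at $\overline{x}$. Hence $T$ does.

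For the converse, assume $T$ has a local minimum at $\overline{x}$. I transfer the hypothesis to $T$ using Step 1 with $\eta$ small, say $\eta < \min\{c,\overline{w}\}/2$. If $|T(x)| \le \frac{\overline{w}}{2}\mathrm{dist}(x,\Sigma)^r$, then $|f(x) - f(\overline{x})| \le \overline{w}\,\mathrm{dist}(x,\Sigma)^r$, so $x$ lies in the horn of $f$. There $\|\nabla T(x)\| \ge \|\nabla f(x)\| - \|\nabla g(x)\| \ge \frac{c}{2}\mathrm{dist}(x,\Sigma)^{r-1}$. So $T$ satisfies \eqref{Item5} with constants $c/2$ and $\overline{w}/2$.

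Next I rerun the proof of \eqref{Item5} $\Rightarrow$ \eqref{Item1} verbatim with $T$ in place of $f$ and the perturbation $-g \in \mathcal{F}_r^{\Sigma}(\overline{x})$. That argument uses only \eqref{Item5}, the local minimality, the Ekeland principle and the Fermat rule. It never uses the standing assumption that the critical points of the function lie in $\Sigma$, which may fail for $T$. It therefore yields that $T + f(\overline{x}) - g = f$ has a local minimum at $\overline{x}$.

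\textbf{Step 3: part (ii).} Since $f$ has a local minimum at $\overline{x}$ and satisfies \eqref{Item5}, Theorem~\ref{DL1} (via \eqref{Item5} $\Rightarrow$ \eqref{Item1} $\Rightarrow$ \eqref{Item3} $\Rightarrow$ \eqref{Item2}) gives $c_0, \delta_0 > 0$ with $f(x) - f(\overline{x}) \ge c_0\,\mathrm{dist}(x,\Sigma)^r$ on $\mathbb{B}_{\delta_0}(\overline{x})$. For $\epsilon < c_0$ this gives
\[
(f+h)(x) - (f+h)(\overline{x}) \ \ge\ (c_0 - \epsilon)\,\mathrm{dist}(x,\Sigma)^r \ \ge\ 0
\]
near $\overline{x}$, which proves (ii).
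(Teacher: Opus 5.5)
Your route is the paper's own. It uses the same auxiliary $g$; adding $f(\overline{x})$ is the right correction, since the rerun of \eqref{Item5}~$\Rightarrow$~\eqref{Item1} needs $g(\overline{x})=0$. It transfers the horn inequality from $f$ to $T$ with arbitrarily small constants in Lemma~\ref{BD31}, which the paper also does tacitly. It makes the same remark that critical points of $T$ need not lie in $\Sigma$, and derives (ii) from item~\eqref{Item2} in the same way. But Step~1 has a real gap. Any bound $|g(x)|\le\eta\,\mathrm{dist}(x,\Sigma)^r$, like Lemma~\ref{BD31} itself, forces $g=0$ on $\Sigma$ near $\overline{x}$. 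Your Taylor-remainder argument only gives $g(x)=g(z)+o(\|x-z\|^r)$, because the constant-free condition $T^rg(z)\equiv 0$ says nothing about $g(z)$. Under the constant-free reading you adopt, $g(z)=T(z)+f(\overline{x})-f(z)=f(\overline{x})-f(z)$ for $z\in\Sigma$. Nothing in your hypotheses makes this vanish.

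In fact, under that reading the ``if'' part of (i) is false. Take $n=1$, $\overline{x}=0$, $r\ge 2$ and $f(x)=\int_0^x\phi$. Here $\phi\ge 0$ is even, positive off $\{0\}\cup\{\pm\frac1k\}$, and on $(\frac{1}{k+1},\frac1k)$ equals a rescaled copy of a fixed $C^\infty$ bump, flat at the endpoints, of height $k^{-2(r-1)}/\log k$. Then $\phi\in C^{r-1}$ with $\phi^{(i)}(0)=0$ for $i\le r-1$, so $f\in C^r$. Let $\Sigma=\{0\}\cup\{\pm\frac1k\}$, which is the critical set; every constant-free jet $T^rf(z)$, $z\in\Sigma$, is $\equiv 0$. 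For $|x|\in(\frac{1}{k+1},\frac1k)$ one has $|f(x)|\gtrsim k^{1-2r}/\log k$ while $\mathrm{dist}(x,\Sigma)^r\lesssim k^{-2r}$, and also $f(\pm\frac1k)\neq 0$. So $\mathcal{H}_r^{\Sigma}(f,0;\overline{w})$ reduces to $\{0\}$ near $0$, and the {\L}ojasiewicz hypothesis holds trivially. Moreover $T^rf(0)\equiv 0$ has a local minimum at $0$, yet $f$ is strictly increasing.

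The fix is to read $T^r$ as the full jet including the value $f(z)$, as the paper implicitly does; otherwise Lemma~\ref{BD31} fails for $g\equiv 1$. In both directions $\overline{x}\in\Sigma$, because $\nabla f(\overline{x})=\nabla T(\overline{x})=0$. So constancy of $z\mapsto T^rf(z)$ on $\Sigma\cap\mathbb{B}_\delta(\overline{x})$ gives $D^jf(z)=D^jP(z)$ for $0\le j\le r$, where $P:=f(\overline{x})+T$ is the full Taylor polynomial at $\overline{x}$. Hence your $g=P-f$ has vanishing full $r$-jet on $\Sigma$. Your little-$o$ estimate then holds, and the rest of your proof goes through as written.
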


\begin{proof}
(i) Let $T := T^r f(\overline{x})$ and $g  := T - f.$ We have $g \in \mathcal{F}_r^{\Sigma}(\overline{x})$ by assumption.
Then the necessary condition follows directly from Theorem~\ref{DL1}.

Conversely, assume that $\overline{x}$ is a local minimum of the polynomial $T.$ Since $g \in \mathcal{F}_r^{\Sigma}(\overline{x}),$ we have for all $x \in \mathbb{B}_\delta(\overline{x})$ that
\begin{eqnarray*}
|g(x)| &\le& \frac{\overline{w}}{2} \, \mathrm{dist}(x, \Sigma)^r \quad \textrm{ and } \quad 
|\nabla g(x)| \ \le \ \frac{c}{2} \,\mathrm{dist}(x, \Sigma)^{r - 1}
\end{eqnarray*}
(perhaps after reducing $\delta$; see Lemma~\ref{BD31}). If in addition $x \in \mathcal{H}_r^{\Sigma}(T, \overline{x}; \frac{\overline{w}}{2}),$ then
\begin{eqnarray*}
|f(x) - f (\overline{x})| &\le& 
|T(x) - T (\overline{x})| + |g(x)| \ \le \ \overline{w} \, \mathrm{dist}(x, \Sigma)^r,
\end{eqnarray*}
which, together with our assumption, yields
\begin{eqnarray*}
\|\nabla T(x)\| &\ge& \|\nabla f(x)\| - \|\nabla g(x)\|  \ \ge \  \frac{c}{2}\, \mathrm{dist}(x, \Sigma)^{r - 1}.
\end{eqnarray*}
Repeating the proof of the implication~\eqref{Item5} $\Rightarrow$ \eqref{Item1} of Theorem~\ref{DL1} 
with $f$ replaced by $T$ and $g$ replaced by $-g,$ we get that $\overline{x}$ is a local minimum of $T + (-g) = f.$
(Note that we do not assume here that $\{x \in \mathbb{B}_\delta(\overline{x})  \mid \nabla T(x) = 0\} \subset \Sigma.$)

(ii) Indeed, by Theorem~\ref{DL1}, there are positive constants $c'$ and $\delta'$ such that
\begin{eqnarray*}
f(x) - f(\overline{x}) &\ge& c'\, \mathrm{dist}(x, \Sigma)^{r} \quad \textrm{ for all } \quad  x \in \mathbb{B}_{\delta'}(\overline{x}).
\end{eqnarray*}
Let ${h} \colon \mathbb{R}^n \to \mathbb{R}$ be a function such that
\begin{eqnarray*}
|{h}(x) - {h}(\overline{x})| &\le& \frac{c'}{2}\, \mathrm{dist}(x, \Sigma)^{r} \quad \textrm{ for all } \quad  x \in \mathbb{B}_{\delta'}(\overline{x}).
\end{eqnarray*}
We have
\begin{eqnarray*}
(f + h)(x) - (f + h)(\overline{x}) &\ge& \frac{c'}{2}\, \mathrm{dist}(x, \Sigma)^{r} \ \ge \ 0 \quad \textrm{ for all } \quad  x \in \mathbb{B}_{\delta'}(\overline{x}),
\end{eqnarray*}
which yields that $f + h$ has a local minimum at $\overline{x}.$
\end{proof}

An important corollary  of Theorem~\ref{DL1} is as follows.

\begin{corollary} \label{HQ2}
Let $c, \delta$ and $\overline{w}$ be positive constants satisfying the {\L}ojasiewicz inequality
\begin{eqnarray*}
\|\nabla f(x)\| &\ge& c\, \|x - \overline{x}\|^{r - 1} \quad \textrm{ for all } \quad  x \in \mathbb{B}_\delta(\overline{x}) \
\textrm{ with } \  |f(x) - f(\overline{x})| \le \overline{w}\, \|x - \overline{x}\|^r.
\end{eqnarray*}
Then the following hold:
\begin{enumerate}[{\rm (i)}]
\item $f$ has a local minimum at $\overline{x}$ if and only if $T^r f(\overline{x})$ has a local minimum at $\overline{x}.$ 

\item Assume that $f$ has a local minimum at $\overline{x}$ and let ${h} \colon \mathbb{R}^n \to \mathbb{R}$ be a function such that
\begin{eqnarray*}
|{h}(x) - {h}(\overline{x})| &\le& \epsilon \, \|x - \overline{x}\|^{r},
\end{eqnarray*}
where $\epsilon > 0$ sufficiently small, $x$ near $\overline{x}.$ Then $f + {h}$ has a local minimum at $\overline{x}.$ 
\end{enumerate}
\end{corollary}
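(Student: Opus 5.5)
The plan is to specialize Corollary~\ref{HQ1} to $\Sigma := \{\overline{x}\}$. Then $\mathrm{dist}(x,\Sigma) = \|x - \overline{x}\|$, and the horn-neighbourhood $\mathcal{H}_r^{\Sigma}(f,\overline{x};\overline{w})$ is exactly the set of $x$ with $|f(x)-f(\overline{x})| \le \overline{w}\,\|x-\overline{x}\|^r$. So the hypothesis of Corollary~\ref{HQ2} is literally the {\L}ojasiewicz inequality required in Corollary~\ref{HQ1}. Two standing assumptions of Section~\ref{Section3} must be checked for this $\Sigma$:
\begin{enumerate}[(a)]
\item $\Sigma$ is locally closed at $\overline{x}$, which is trivial for a single point;
\item $\{x \in V \mid \nabla f(x)=0\} \subset \Sigma$, at least after shrinking $V$ to a small ball.
\end{enumerate}
The extra hypothesis of Corollary~\ref{HQ1}, namely that $T^rf(\cdot)$ is constant on $\Sigma\cap\mathbb{B}_\delta(\overline{x})$, is vacuous, since that set is the single point $\{\overline{x}\}$.

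For (b), I will use the hypothesis directly. Take $x \in \mathbb{B}_\delta(\overline{x})$ with $x \neq \overline{x}$ and $\nabla f(x) = 0$. If $x$ lies in the horn $|f(x)-f(\overline{x})|\le \overline{w}\|x-\overline{x}\|^r$, the hypothesis gives $0 = \|\nabla f(x)\| \ge c\|x-\overline{x}\|^{r-1} > 0$, a contradiction. So critical points other than $\overline{x}$ can only lie outside the horn, and the real question is whether they can exist there arbitrarily close to $\overline{x}$.

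This is the main obstacle. I would try to avoid it rather than prove it, because the paper itself notes in Corollary~\ref{HQ1} that the inclusion of critical points in $\Sigma$ is not actually needed in the proof of \eqref{Item5} $\Rightarrow$ \eqref{Item1}. The direction "$T^rf(\overline{x})$ has a local minimum $\Rightarrow$ $f$ has one" was proved in Corollary~\ref{HQ1} by rerunning that argument with $T$ in place of $f$, explicitly without the critical point assumption. For the forward direction, "$f$ has a local minimum $\Rightarrow$ $T$ has one", I would likewise rerun the Ekeland argument of \eqref{Item5} $\Rightarrow$ \eqref{Item1} directly with $g := T - f$. That argument only uses three things:
\begin{itemize}
\item the Ekeland point $x'_k$ lands in the horn;
\item the estimate $|g|\le c'\|x-\overline{x}\|^r$ from Lemma~\ref{BD31}, valid because $T^rg(\overline{x})\equiv 0$, so $g\in\mathcal{F}_r^{\{\overline{x}\}}(\overline{x})$;
\item the gradient bound at $x'_k$.
\end{itemize}
None of these needs the critical point inclusion. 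One detail needs care: the Ekeland radius $\lambda = \frac12\|x_k-\overline{x}\|$ must keep $\|x'_k-\overline{x}\|$ comparable to $\|x_k - \overline{x}\|$, which the triangle inequality gives. This yields (i).

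For (ii), I will derive growth directly. Applying the rerun argument of \eqref{Item5} $\Rightarrow$ \eqref{Item1} to the perturbations $g(x) = -\eta\|x-\overline{x}\|^{r}$ (or simply repeating the proof of \eqref{Item3} $\Rightarrow$ \eqref{Item2}, with Ekeland restricted to the horn) gives $f(x)-f(\overline{x})\ge c''\|x-\overline{x}\|^r$ near $\overline{x}$. Then I conclude exactly as in part (ii) of Corollary~\ref{HQ1}: for $\epsilon\le c''/2$ we get $(f+h)(x)-(f+h)(\overline{x})\ge \frac{c''}{2}\|x-\overline{x}\|^r\ge0$. If the test function is not $C^r$ at $\overline{x}$ (when $r$ is odd), I would replace it by $-\eta\|x-\overline{x}\|^{2\lceil r/2\rceil}$ suitably adjusted, or instead run Ekeland directly on $f$ with $\epsilon_k = \frac1k\|x_k-\overline{x}\|^r$. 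Both routes produce a point $x'_k$ in the horn with small gradient, contradicting the hypothesis.
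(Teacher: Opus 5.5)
Your proposal is correct. It rests on the same specialization $\Sigma=\{\overline{x}\}$ as the paper, whose entire proof is to apply Corollary~\ref{HQ1}. The difference is that you rerun the Ekeland argument instead of invoking Corollary~\ref{HQ1} and Theorem~\ref{DL1} as black boxes. This is not mere pedantry: the paper's one-liner tacitly uses the standing hypothesis $\{x\in V\mid\nabla f(x)=0\}\subset\{\overline{x}\}$.

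\begin{itemize}
\item Part (i) of Corollary~\ref{HQ1} only needs \eqref{Item5}~$\Rightarrow$~\eqref{Item1}, which never uses that hypothesis.
\item Part (ii) goes through \eqref{Item5}~$\Rightarrow$~\eqref{Item1}~$\Rightarrow$~\eqref{Item3}~$\Rightarrow$~\eqref{Item2}, and \eqref{Item1}~$\Rightarrow$~\eqref{Item3} does use it.
\end{itemize}

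Your direct run avoids this. You apply Ekeland to $f$ with $\epsilon=\frac1k\|x_k-\overline{x}\|^r$ and $\lambda=\frac12\|x_k-\overline{x}\|$. Then $0\le f(x_k')-f(\overline{x})<\frac{2^r}{k}\|x_k'-\overline{x}\|^r$ puts $x_k'$ in the horn, while $\|\nabla f(x_k')\|\le\frac{2^r}{k}\|x_k'-\overline{x}\|^{r-1}$. This gives the growth bound needed for (ii) with no assumption on critical points.

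On the other hand, the obstacle you chose to sidestep can be removed in two lines, which validates the paper's reduction. Suppose $x_k\to\overline{x}$, $x_k\neq\overline{x}$, were critical points, and write $f=f(\overline{x})+T+R$. Then $\nabla T(x_k)=-\nabla R(x_k)=o(\|x_k-\overline{x}\|^{r-1})$. The Bochnak--{\L}ojasiewicz inequality $|T(x)-T(\overline{x})|\le c_1\|x-\overline{x}\|\,\|\nabla T(x)\|$, already used in the proof of \eqref{Item1}~$\Rightarrow$~\eqref{Item3}, then gives $|f(x_k)-f(\overline{x})|=o(\|x_k-\overline{x}\|^r)$. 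So $x_k$ lies in the horn, where the hypothesis forbids $\nabla f(x_k)=0$.

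One correction to your fallback for (ii): the smoothness of $-\eta\|x-\overline{x}\|^r$ for odd $r$ is irrelevant. The \eqref{Item5}~$\Rightarrow$~\eqref{Item1} argument applies Ekeland to $f$ alone and never differentiates $g$. It only uses $g(\overline{x})=0$ and $|g(x)|\le c'\|x-\overline{x}\|^r$ with $c'<2^{-r}\min\{c,\overline{w}\}$; the paper's last corollary applies it to an arbitrary $h$ for exactly this reason. Your proposed substitute $-\eta\|x-\overline{x}\|^{2\lceil r/2\rceil}$ would in fact be too weak for odd $r$, since it yields only growth of order $r+1$. Drop it and keep either of your other two routes, both of which work.
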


\begin{proof}
Apply Corollary~\ref{HQ1} with $\Sigma = \{\overline{x}\}.$
\end{proof}

Another corollary of Theorem~\ref{DL1} is as follows; see also \cite{Schaffler1992}.
 
\begin{corollary} \label{HQ3}
If $T^{r - 1}f(\overline{x}) \equiv f(\overline{x})$ and $\nabla T^{r}f(\overline{x}) (x) = 0$ if and only if $x = \overline{x},$ then there exist constants $c > 0$ and $\delta > 0$ such that
\begin{eqnarray*}
\|\nabla f(x) \| &\ge& c\, \|x - \overline{x}\|^{r - 1} \quad \textrm{ for all } \quad x \in \mathbb{B}_\delta(\overline{x}).
\end{eqnarray*}
Consequently, the conclusions of Corollary~\ref{HQ2} hold.
\end{corollary}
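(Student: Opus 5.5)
Without loss of generality we may take $\overline{x} = 0$, and we may assume $f(\overline{x}) = 0$ by subtracting a constant. The hypothesis $T^{r-1}f(\overline{x}) \equiv f(\overline{x})$ says that all derivatives of $f$ at $0$ of order $1, \ldots, r-1$ vanish. Hence $P := T^rf(\overline{x}) - f(\overline{x})$ is a homogeneous polynomial of degree $r$, and $\nabla P$ is a homogeneous polynomial map of degree $r-1$. We write $f = f(\overline{x}) + P + R$, where, as in the proof of Theorem~\ref{DL1} (see \eqref{Eqn4}), the remainder satisfies
\begin{eqnarray*}
\lim_{x \to \overline{x}} \frac{\|\nabla R(x)\|}{\|x - \overline{x}\|^{r - 1}} &=& 0.
\end{eqnarray*}
This follows from the Taylor formula, since $\nabla R = \nabla f - \nabla T^rf(\overline{x})$ and $\nabla T^r f(\overline{x})$ is the $(r-1)$st Taylor polynomial of $\nabla f$, which is of class $C^{r-1}$.

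The key step is a lower bound for $\nabla P$ by homogeneity and compactness. By hypothesis $\nabla P(u) \neq 0$ for every $u$ in the unit sphere $\mathbb{S}$. Since $\mathbb{S}$ is compact and $\nabla P$ is continuous,
\begin{eqnarray*}
2c &:=& \min_{u \in \mathbb{S}} \|\nabla P(u)\| \ > \ 0.
\end{eqnarray*}
By homogeneity of degree $r-1$, for $x \neq 0$ we get $\nabla P(x) = \|x\|^{r-1} \nabla P(x/\|x\|)$, and therefore $\|\nabla P(x)\| \ge 2c\|x\|^{r-1}$ for all $x$. This is where the hypothesis on $\nabla T^rf(\overline{x})$ is used. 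It is the only nontrivial point, and it reduces to the observation that $\nabla T^r f(\overline{x}) = \nabla P$ is homogeneous.

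Next we absorb the remainder. We choose $\delta > 0$ so that $\|\nabla R(x)\| \le c\|x\|^{r-1}$ on $\mathbb{B}_\delta(\overline{x})$. Then
\begin{eqnarray*}
\|\nabla f(x)\| &\ge& \|\nabla P(x)\| - \|\nabla R(x)\| \ \ge \ c\,\|x\|^{r-1} \quad \textrm{ for all } \quad x \in \mathbb{B}_\delta(\overline{x}),
\end{eqnarray*}
which is the claimed inequality.

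For the final assertion, this inequality holds on all of $\mathbb{B}_\delta(\overline{x})$, so in particular it holds on the horn-neighbourhood with any width $\overline{w}$. Thus the hypothesis of Corollary~\ref{HQ2} is satisfied, and its conclusions follow. I expect no real obstacle; the only care needed is to justify the remainder estimate for $\nabla R$ when $f$ is only $C^r$, which is the standard Peano form of Taylor's theorem applied to $\nabla f$.
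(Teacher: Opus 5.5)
Your proof is correct and follows the paper's own argument. It uses the same splitting $f = T^rf(\overline{x}) + R$ and the same bound $\|\nabla T^rf(\overline{x})(x)\| \ge 2c\|x-\overline{x}\|^{r-1}$, and it absorbs $\|\nabla R(x)\| = o(\|x-\overline{x}\|^{r-1})$ by the triangle inequality in the same way. Your version also spells out two points the paper leaves implicit: the homogeneity-plus-compactness argument behind the paper's ``by assumption'', and the fact that an inequality on the whole ball $\mathbb{B}_\delta(\overline{x})$ gives the horn-restricted hypothesis of Corollary~\ref{HQ2} (it also makes $\overline{x}$ an isolated critical point, which the standing assumption with $\Sigma=\{\overline{x}\}$ needs).
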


\begin{proof}
Write $f$ in the form $f = T + R,$ where $T := T^rf(\overline{x})$ is the $r$th Taylor polynomial of $f$ at $\overline{x}$ and $R$ is the remainder.
By assumption, there must exist a constant $c > 0$ such that
\begin{eqnarray*}
\|\nabla T(x) \| &\ge& 2c\, \|x - \overline{x}\|^{r - 1} \quad \textrm{ for all } \quad x \in \mathbb{R}^n.
\end{eqnarray*}
On the other hand, we know that 
$$\lim_{x \to \overline{x}} \frac{\|\nabla R(x)\|}{\|x - \overline{x}\|^{r - 1}} \ = \ 0.$$
Therefore,
\begin{eqnarray*}
\|\nabla f(x) \| &\ge& \|\nabla T(x) \| - \|\nabla R(x)\| \ \ge \ c\, \|x - \overline{x}\|^{r - 1}
\end{eqnarray*}
in some neighbourhood of $\overline{x},$ as required.
\end{proof}

\begin{example}{\rm
(i) Consider the function $f \colon \mathbb{R}^2 \to \mathbb{R}, (x, y) \mapsto x^{r} + x^{r + 1}R(y),$ with $r \ge 1$ and $R$ given by
\begin{eqnarray*}
R(y) :=
\begin{cases}
e^{-\frac{1}{y^2}} & \textrm{ if } y \ne 0, \\
0 & \textrm{ otherwise.}
\end{cases}
\end{eqnarray*}
For $\Sigma := \{x = 0\} \subset  \mathbb{R}^2,$ it is easy to check that there is a constant $c > 0$ such that
\begin{eqnarray*}
\|\nabla f(x, y)\| &\ge& c\, \mathrm{dist}((x, y), \Sigma)^{r - 1}
\end{eqnarray*}
for all $(x, y)$ in a neighbourhood of $(0, 0) \in \mathbb{R}^2.$ Moreover, $T^{r} f(\cdot)$ is constant on $\Sigma.$
In light of Corollary~\ref{HQ1}, $f$ has a local minimum at $(0, 0)$ if and only if the polynomial $T^{r} f(0, 0)$ has a local minimum at $(0, 0).$ 

(ii) Consider the function $f \colon \mathbb{R}^2 \to \mathbb{R}, (x, y) \mapsto x^{3} - 3xy^k + tR(y),$ with $t \in \mathbb{R},$  $k \ge 3$ and $R$ given in the above example. It is not hard to see that (see also \cite[Example~1]{Kuo1969-2}) there exists a constant $c > 0$ such that
\begin{eqnarray*}
\|\nabla f(x, y)\| &\ge& c\, \|(x, y)\|^{\frac{3k}{2} - 1}
\end{eqnarray*}
for all $(x, y)$ in a neighbourhood of $(0, 0) \in \mathbb{R}^2.$ On the other hand, for all $r \ge k + 1$ we have $T^r f(0, 0)$ is the polynomial $x^{3} - 3xy^k,$ and so it has not a local minimum at $(0, 0).$ By Corollary~\ref{HQ2}, $f$ has not a local minimum at $(0, 0).$ 
}\end{example}

\begin{remark}{\rm
There are algorithms for checking whether a given point is a local minimum of a polynomial function, see \cite{Guo2019, PHAMTS2020}. This fact, together with Corollaries~\ref{HQ1}, \ref{HQ2} and \ref{HQ3}, can assist in determining the local minimum property of differentiable functions.
}\end{remark}

From the proof of Theorem~\ref{DL1} we obtain the following stability result.

\begin{corollary}
Let $c, \delta$ and $\overline{w}$ be positive constants satisfying the {\L}ojasiewicz inequality
\begin{eqnarray*}
\|\nabla f(x)\| &\ge& c\, \mathrm{dist}(x, \Sigma)^{r - 1} \quad \textrm{ for all } \quad  x \in \mathcal{H}_r^{\Sigma}(f, \overline{x}; \overline{w}) \cap \mathbb{B}_\delta(\overline{x}),
\end{eqnarray*}
where $r \ge 2$ and $\Sigma := \{x \in V \mid \nabla f(x) = 0\}.$ 
If $f$ has a local minimum at $\overline{x},$ then $f + {h}$ also has a local minimum at $\overline{x}$ for any function ${h} \colon \mathbb{R}^n \to \mathbb{R}$ satisfying
\begin{eqnarray*}
|{h}(x) - {h}(\overline{x})| &\le& \epsilon \, \|\nabla f(x)\|^{r},
\end{eqnarray*}
where $\epsilon > 0$ sufficiently small, $x$ near $\overline{x}.$
\end{corollary}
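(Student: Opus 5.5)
The plan is to reduce to the implication (v) $\Rightarrow$ (ii) of Theorem~\ref{DL1} and then handle $h$ by comparing $\|\nabla f(x)\|^r$ with $\mathrm{dist}(x,\Sigma)^r$. Here $\Sigma$ is exactly the critical set of $f$ in $V$. It is locally closed at $\overline{x}$ because $\nabla f$ is continuous, and it trivially satisfies the standing assumption $\{x\in V\mid \nabla f(x)=0\}\subset\Sigma$. Since $\overline{x}$ is a local minimum of $f$, Theorem~\ref{DL1} applies with this $\Sigma$. The hypothesis is item (v), so item (ii) holds: there are $c',\delta'>0$ with
\begin{eqnarray*}
f(x)-f(\overline{x}) &\ge& c'\,\mathrm{dist}(x,\Sigma)^r \quad\textrm{ for all } x\in\mathbb{B}_{\delta'}(\overline{x}).
\end{eqnarray*}

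Next, I would bound $\|\nabla f(x)\|$ above by a multiple of $\mathrm{dist}(x,\Sigma)$. Take $\delta'$ small so that $\nabla f$ is $C^1$, hence Lipschitz with some constant $L$, on a ball $\mathbb{B}_{2\delta'}(\overline{x})\subset V$; this uses $r\ge 2$. For $x\in\mathbb{B}_{\delta'/2}(\overline{x})$, note that $\overline{x}\in\Sigma$ (Fermat's rule), so $\mathrm{dist}(x,\Sigma)\le\|x-\overline{x}\|$. Hence a nearest point $y\in\Sigma$ (closed near $\overline{x}$) lies in $\mathbb{B}_{\delta'}(\overline{x})$, and it satisfies $\nabla f(y)=0$. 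Therefore
\begin{eqnarray*}
\|\nabla f(x)\| &=& \|\nabla f(x)-\nabla f(y)\| \ \le \ L\,\mathrm{dist}(x,\Sigma).
\end{eqnarray*}
The only subtlety is ensuring the nearest point stays inside $V$, where $\Sigma$ is defined and $\nabla f$ vanishes. Local closedness plus the bound $\|y-\overline{x}\|\le 2\|x-\overline{x}\|$ takes care of this.

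Finally, if $|h(x)-h(\overline{x})|\le\epsilon\|\nabla f(x)\|^r$ with $\epsilon L^r\le c'/2$, then for $x$ near $\overline{x}$,
\begin{eqnarray*}
(f+h)(x)-(f+h)(\overline{x}) &\ge& c'\,\mathrm{dist}(x,\Sigma)^r-\epsilon L^r\,\mathrm{dist}(x,\Sigma)^r \ \ge \ 0.
\end{eqnarray*}
So $\overline{x}$ is a local minimum of $f+h$. Equivalently, this is Corollary~\ref{HQ1}(ii) applied to this $\Sigma$.

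The main obstacle I expect is the Lipschitz comparison step, in particular the role of $r\ge2$. For $r=1$, $\nabla f$ is only continuous and the comparison could fail. Everything else is a direct invocation of Theorem~\ref{DL1}.
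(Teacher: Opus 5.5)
Your proof is correct. The Lipschitz comparison $\|\nabla f(x)\|\le L\,\mathrm{dist}(x,\Sigma)$ is the paper's, and you supply the nearest-point detail it omits. The second half, however, is organised differently.

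\textbf{What the paper does.} It never passes through the growth condition (ii). It takes $\epsilon L^r<2^{-r}\min\{c,\overline{w}\}$ and reruns the Ekeland argument of (v) $\Rightarrow$ (i) with $g:=h-h(\overline{x})$. It has to rerun rather than cite, because $h$ is arbitrary: $g$ need not lie in $\mathcal{F}_r^{\Sigma}(\overline{x})$, so item (i) cannot simply be quoted.

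\textbf{What your route buys, and what it costs.} You extract (ii), which absorbs any perturbation bounded by a small multiple of $\mathrm{dist}(x,\Sigma)^r$, smooth or not, so nothing needs rerunning. The price is that inside the paper (v) $\Rightarrow$ (ii) is only available via the cycle (v) $\Rightarrow$ (i) $\Rightarrow$ (iii) $\Rightarrow$ (ii). You are therefore leaning on (i) $\Rightarrow$ (iii), the bump-function construction with the Bochnak--{\L}ojasiewicz inequality. That step is far heavier than this corollary needs. Its written proof also only checks $g(x)=o(\|x-\overline{x}\|^r)$, not that the constructed $g$ is $C^r$ with vanishing $r$-jet at $\overline{x}$. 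A further cost is that your threshold for $\epsilon$ is not explicit.

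\textbf{How to remove both drawbacks.} Prove (v) $\Rightarrow$ (ii) directly, copying the proof of (iii) $\Rightarrow$ (ii). Suppose $f(x_k)-f(\overline{x})<\frac1k\,\mathrm{dist}(x_k,\Sigma)^r$. The Ekeland point $x_k'$ then satisfies $0\le f(x_k')-f(\overline{x})<\frac{2^r}{k}\,\mathrm{dist}(x_k',\Sigma)^r$. Hence $x_k'$ lies in $\mathcal{H}_r^{\Sigma}(f,\overline{x};\overline{w})$ for large $k$, and (v) contradicts $\|\nabla f(x_k')\|\le\frac2k\,\mathrm{dist}(x_k,\Sigma)^{r-1}$. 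Run with a fixed constant instead of $\frac1k$, the same argument gives (ii) for any $c'<2^{-r}\min\{c,\overline{w}\}$. Your absorption step then reproduces the paper's threshold $\epsilon L^r<2^{-r}\min\{c,\overline{w}\}$. At that point the two proofs are the same Ekeland argument, packaged either as a growth bound or as a perturbation statement.

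\textbf{A citation to drop.} Remove the remark that this is Corollary~\ref{HQ1}(ii). That corollary additionally assumes $T^rf(\cdot)$ is constant on $\Sigma\cap\mathbb{B}_\delta(\overline{x})$, which is not available here. Its proof of part (ii) does not use that hypothesis, but the corollary as stated does not apply.
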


\begin{proof}
The gradient mapping $\nabla f$ is of class $C^{r - 1},$ and so it is locally Lipschitz. Consequently, there exists a constant $L > 0$ such that for all $x$ near $\overline{x},$ 
\begin{eqnarray*}
\|\nabla f(x)\| &\le& L\, \mathrm{dist}(x, \Sigma).
\end{eqnarray*}
Let $0 < \epsilon < \frac{1}{2^r L^r} \min \{c, \overline{w}\}$ and ${h} \colon \mathbb{R}^n \to \mathbb{R}$ be a function such that for all $x$ near $\overline{x},$ 
\begin{eqnarray*}
|{h}(x) - {h}(\overline{x})| &\le& \epsilon \, \mathrm{dist}(x, \Sigma)^{r}.
\end{eqnarray*}
Then 
\begin{eqnarray*}
|{h}(x) - {h}(\overline{x})| &\le& \epsilon \, \|\nabla f(x)\|^{r} \ \le \ \epsilon\, L^r \mathrm{dist}(x, \Sigma)^{r}.
\end{eqnarray*}
Repeating the proof of the implication \eqref{Item5}~$\Rightarrow$~\eqref{Item1} in Theorem~\ref{DL1} with $g$ replaced by $h - h(\overline{x}),$ we get the desired conclusion.
\end{proof}


\begin{thebibliography}{10}

\bibitem{Bekka2024}
K.~Bekka and S.~Koike.
\newblock Characterisations of {{$V$}}-sufficiency and {{$C^0$}}-sufficiency of
  relative jets.
\newblock {\em Hokkaido Math. J.}, 53(1):1--50, 2024.

\bibitem{Bierstone1988}
E.~Bierstone and P.~D. Milman.
\newblock Semianalytic and subanalytic sets.
\newblock {\em Inst. Hautes \'Etudes Sci. Publ. Math.}, 67:5--42, 1988.

\bibitem{Bochnak1971}
J.~Bochnak and S.~{{\L}}ojasiewicz.
\newblock A converse of the {{K}}uiper--{{K}}uo theorem.
\newblock In {\em Proceedings of Liverpool Singularities Symposium, I
  (1969/70)}, volume 192 of {\em Lecture Notes in Math.}, pages 254--261,
  Springer, Berlin, 1971.

\bibitem{Ekeland1974}
I.~Ekeland.
\newblock On the variational principle.
\newblock {\em J. Math. Anal. Appl.}, 47:324--353, 1974.

\bibitem{Grandjean2004}
V.~Grandjean.
\newblock Infinite relative determinacy of smooth function germs with
  transverse isolated singularities and relative {{\L}}ojasiewicz conditions.
\newblock {\em J. London Math. Soc.}, 69:518--530, 2004.

\bibitem{Guo2019}
F.~Guo and T.~S. Ph\d{a}m.
\newblock On types of degenerate critical points of real polynomial functions.
\newblock {\em J. Symb. Comput.}, 99:108--126, 2020.

\bibitem{Kuiper1972}
N.~H. Kuiper.
\newblock {{$C^1$}}-equivalence of functions near isolated critical points.
\newblock In {\em Symposium on Infnite Dimensional Topology}, volume~69 of {\em
  Ann. of Math. Studies}, Princeton, N.J., 1972. Princeton Univ. Press.

\bibitem{Kuo1969-2}
T.-C. Kuo.
\newblock A complete determination of {{$C^0$}}-sufficiency in {{$J^r(2; 1)$}}.
\newblock {\em Invent. math.}, 8:226--235, 1969.

\bibitem{Kuo1969}
T.~C. Kuo.
\newblock On {{$C^0$}}-sufficiency of jets of potential functions.
\newblock {\em Topology.}, 8:167--171, 1969.

\bibitem{Kuo1972}
T.-C. Kuo.
\newblock Characterizations of {{$V$}}-sufficiency of jets.
\newblock {\em Topology}, 11:115--131, 1972.

\bibitem{Mather1968}
J.~Mather.
\newblock Stability of ${{C}}^1$ mappings {{III}}: Finitely determined
  map-germs.
\newblock {\em Inst. Hautes \'Etudes Sci. Publ. Math.}, 35:279--308, 1968.

\bibitem{Migus2016}
P.~Migus, T.~Rodak, and S.~Spodzieja.
\newblock Finite determinacy of non-isolated singularities.
\newblock {\em Ann. Polon. Math.}, 117:197--206, 2016.

\bibitem{Mordukhovich2006}
B.~S. Mordukhovich.
\newblock {\em Variational Analysis and Generalized Differentiation, I: Basic
  Theory; II: Applications}.
\newblock Springer, Berlin, 2006.

\bibitem{Mordukhovich2018}
B.~S. Mordukhovich.
\newblock {\em Variational Analysis and Applications}.
\newblock Springer, New York, 2018.

\bibitem{PHAMTS2020}
T.~S. Ph\d{a}m.
\newblock Local minimizers of semi-algebraic functions from the viewpoint of
  tangencies.
\newblock {\em SIAM J. Optim.}, 30(3):1777--1794, 2020.

\bibitem{Rockafellar1998}
R.~T. Rockafellar and R.~Wets.
\newblock {\em Variational Analysis}, volume 317 of {\em Grundlehren Math.
  Wiss.}
\newblock Springer, Berlin, 1998.

\bibitem{Schaffler1992}
S.~Sch\"affler.
\newblock Classification of critical stationary points in unconstrained
  optimization.
\newblock {\em SIAM J. Optim.}, 2(1):1--6, 1992.

\bibitem{Thom1964}
R.~Thom.
\newblock Local topological properties of differentiable mappings.
\newblock In {\em Differential Analysis, Bombay Colloq.}, pages 191--202.
  Oxford Univ. Press, London, 1964.

\bibitem{Xu2007}
X.~Xu.
\newblock {{$C^0$}}-sufficiency, {{K}}uiper--{{K}}uo and {{T}}hom conditions
  for non-isolated singularity.
\newblock {\em Acta Mathematica Sinica}, 23:1251--1256, 2007.

\end{thebibliography}
\end{document}